\newtheorem{theorem}{Theorem}
\newtheorem{definition}[theorem]{Definition}
\newtheorem{proposition}[theorem]{Proposition}
\newtheorem{lemma}[theorem]{Lemma}
\newtheorem{corollary}[theorem]{Corollary}
\newtheorem{fact}[theorem]{Fact}
\newtheorem{prop}[theorem]{Proposition}
\newtheorem*{claim*}{Claim}
\theoremstyle{remark}
\newtheorem*{remark*}{Remark}
\newtheorem{remark}[theorem]{Remark}
\newtheorem{example}[theorem]{Example}
\numberwithin{theorem}{section}
\renewcommand{\phi}{\varphi}
\renewcommand{\leq}{\le}
\renewcommand{\geq}{\ge}
\newcommand{\eps}{\varepsilon}
\newcommand{\Ebar}{\overline{E}}
\renewcommand{\le}{\leqslant}
\renewcommand{\ge}{\geqslant}
\newcommand{\R}{\mathbb R}
\newcommand{\rand}{\operatorname{rand}}
\title
[]{The binomial random graph is a bad inducer}
\author{Vishesh Jain}
\author{Marcus Michelen}
\address{University of Illinois Chicago}
\email{visheshj@uic,edu, michelen@uic.edu}
\author{Fan Wei}
\address{Duke University}
\email{fan.wei@duke.edu}
\begin{document}
	\begin{abstract}
        For a finite graph $F$ and a value $p \in [0,1]$, let $I(F,p)$ denote the largest $y$ for which there is a sequence of graphs of edge density approaching $p$ so that the induced $F$-density of the sequence approaches $y$.   We show that for all $F$ on at least three vertices and all $p \in (0,1)$, the binomial random graph $G(n,p)$ has induced $F$-density strictly less than $I(F,p).$    
        This provides a negative answer to a problem posed by Liu, Mubayi and Reiher \cite{liu2023feasible}. 
        
        Our approach is in the limiting setting of graphons, and we in fact show a stronger result: the binomial random graph is never a \emph{local} maximum in the space of graphons of edge density $p$.  This is done by finding a sequence of balanced perturbations of arbitrarily small norm that increase the $F$-density.   
	\end{abstract}	
\maketitle

\section{Introduction}

For a finite labeled graph $G$ with vertex set $V(G)$ and edge set $E(G)$, recall that the edge density of $G$ is given by $\rho(G) = |E(G)|/ \binom{|V(G)|}{2}\,.$  Given another finite labeled graph $F$, let $$N(F,G) := \left|\{\phi : V(F) \hookrightarrow V(G) : (a,b) \in E(F) \iff (\phi(a),\phi(b)) \in E(G)\}   \right| $$ be the number of induced copies of $F$ in $G$ and define the induced $F$-density of $G$ to be $$\rho(F,G) := \frac{N(F,G)}{(|V(G)|)_{|V(F)|}}$$
where we write $(x)_k := x(x-1)\cdots(x-(k-1))$ for the falling factorial.  Finally, define the maximum induced $F$-density at edge density $p \in [0,1]$ via \begin{equation*}
    I(F,p) := \sup\left\{y : \exists~\{G_n\}_{n\geq 1}, \lim_{n\to\infty} |V(G_n)| = \infty, \lim_{n \to \infty} \rho(G_n) = p, \lim_{n \to \infty}   \rho(F,G_n) =y\right\}\,.
\end{equation*}
Informally, $I(F,p)$ is the largest induced $F$-density among large graphs of edge density approaching $p$. The maximum value of $I(F,p)$ over $p \in [0,1]$ is exactly the inducibility of $F$, introduced by Pippenger and Golumbic \cite{pippenger1975inducibility}.

Linearity of expectation shows that the expected induced $F$-density in the binomial random graph $G(n,p)$ is precisely $$\rand(F,p) := p^{|E(F)|}(1-p)^{\binom{n}{2} - |E(F)|}\,.$$ By basic concentration estimates, if we set $G_n$ to be an instance of $G(n,p)$ for each $n$, then we almost-surely have $\rho(G_n) \to p$ and $\rho(F,G_n) \to \rand(F,P)$.  As such, we always have $\rand(F,p) \leq I(F,p).$

 The question of whether random graphs are the extremal constructions for $I(F,p)$ seems to have been first explicitly investigated by
Even-Zohar and Linial \cite{even2015note}, who suggested exploring the performance of random constructions in maximizing the inducibility of $F$. In particular, they left open whether for $F$ given by the disjoint union of a path of length $3$ and an isolated vertex, the inducibility is achieved (in the limit) by $G(n,3/10)$. Perhaps suggesting that binomial random graphs can be optimal inducers in some examples, Liu, Mubayi and Reiher asked ``an easier question'' \cite[Problem~1.6]{liu2023feasible} whether there is a graph $F$ and $p \in (0,1)$ so that $I(F,p) = \rand(F,p)$\footnote{We note that Liu, Mubayi and Reiher work with unlabeled graphs rather than labeled graphs, but this only changes the quantities $N(F,G)$ by a factor depending only on $F$.}. In this paper, we provide a negative answer to this question. 
\begin{theorem}\label{thm:main}
    For each finite labeled graph $F$ with $|V(F)| \geq 3$ and for all $p \in (0,1)$, $$I(F,p) > \rand(F,p).$$ 
\end{theorem}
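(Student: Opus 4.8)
The plan is to pass to the graphon limit, where $I(F,p)=\sup\{t_{\mathrm{ind}}(F,W):W\text{ a graphon with }\iint W=p\}$ with $t_{\mathrm{ind}}(F,W)=\int\prod_{ij\in E(F)}W(x_i,x_j)\prod_{ij\notin E(F)}(1-W(x_i,x_j))\,dx$, and the constant graphon $W_p\equiv p$ achieves $t_{\mathrm{ind}}(F,W_p)=\rand(F,p)$. So it is enough to produce, for each $F$ with $|V(F)|\ge 3$ and each $p\in(0,1)$, a graphon of edge density $p$ beating $\rand(F,p)$. For most $(F,p)$ this will come from an infinitesimal perturbation of $W_p$, and for the rest from an explicit global graphon.

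For the perturbative part I would fix a bounded symmetric $g$ with $\iint g=0$, so that $W_p+\delta g$ is a graphon of edge density $p$ for small $|\delta|$, and Taylor-expand $t_{\mathrm{ind}}(F,W_p+\delta g)$ in $\delta$: expanding each factor $1-W(x_i,x_j)$ by inclusion--exclusion and collecting terms gives
\[
t_{\mathrm{ind}}(F,W_p+\delta g)=\rand(F,p)\sum_{H\subseteq K_{V(F)}}\delta^{|E(H)|}\Bigl(\prod_{e\in E(H)}h_F(e)\Bigr)t(H;g),
\]
where $h_F(e)=1/p$ if $e\in E(F)$ and $h_F(e)=-1/(1-p)$ otherwise, and $t(H;g)=\int\prod_{uv\in E(H)}g(x_u,x_v)\,dx$ is a homomorphism density of the kernel $g$. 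The $\delta^0$ term is $\rand(F,p)$ and the $\delta^1$ term is $\rand(F,p)\bigl(\sum_e h_F(e)\bigr)\iint g=0$, so $W_p$ is always a critical point; moreover $t(H;g)=0$ whenever $H$ has a degree-$1$ vertex and $\int g(x,y)\,dy\equiv 0$, so for such ``degree-regular'' $g$ only subgraphs $H$ of minimum degree $\ge 2$ survive. Everything then reduces to choosing $g$ so that the first surviving coefficient points the right way.

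The $\delta^2$ coefficient equals $\tfrac12\rand(F,p)\,C(F,p)\|d_g\|_2^2$, with $d_g(x)=\int g(x,y)\,dy$ and $C(F,p)=\sum_{v\in V(F)}\bigl[(\tfrac{d_v}{p}-\tfrac{n-1-d_v}{1-p})^2-\tfrac{d_v}{p^2}-\tfrac{n-1-d_v}{(1-p)^2}\bigr]$ ($d_v=\deg_F(v)$, $n=|V(F)|$; pairs of disjoint edges contribute $0$ since $\iint g=0$, and the adjacent-edge ``cherry'' terms sum to $\tfrac12 C(F,p)$). If $C(F,p)>0$, take $g(x,y)=\phi(x)+\phi(y)$ with $\phi$ bounded, $\int\phi=0$, $\phi\not\equiv 0$; then $d_g=\phi\ne 0$ and the $\delta^2$ term is positive, so we win. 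If $C(F,p)\le 0$, restrict to degree-regular $g$: the $\delta^2$ term dies and the leading correction is $\rand(F,p)\,c_3(F,p)\,t(K_3;g)$, where $c_3(F,p)=\sum_{T\in\binom{V(F)}{3}}\prod_{e\subseteq T}h_F(e)$ and $t(K_3;g)=\sum_i\lambda_i(g)^3$; since $g=\pm\psi\otimes\psi$ (with $\int\psi=0$, $\psi\not\equiv 0$) makes $t(K_3;g)=\pm\|\psi\|_2^6$, if $c_3(F,p)\ne 0$ we win by choosing the sign of $\delta$. If that also fails one pushes to $\delta^4$, governed by $4$-cycles through $t(C_4;g)=\sum_i\lambda_i(g)^4\ge 0$, which needs $c_4(F,p):=\sum_{\text{$4$-cycles }Q}\prod_{e\in E(Q)}h_F(e)>0$, and so on up the orders.

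The main obstacle is that this chain can genuinely stop: for certain $(F,p)$ — for instance $F=C_5$, $p=\tfrac12$, where $C(F,p)<0$, $c_3(F,p)=0$, and $c_4(F,p)<0$ — the constant graphon $W_p$ is an honest local maximum of $t_{\mathrm{ind}}(F,\cdot)$ on $\{\iint W=p\}$, so no perturbation can work and one must instead exhibit a graphon far from $W_p$. A natural candidate is a (possibly iterated) blow-up of $F$: split $[0,1]$ into clusters indexed by $V(F)$, with internal density $0$, density $1$ between clusters of adjacent vertices, and a common density between clusters of non-adjacent vertices chosen to restore edge density $p$; one then lower-bounds its induced $F$-count by a direct (if delicate) count over cluster assignments and checks it exceeds $\rand(F,p)$. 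The two hard points are (i) determining exactly which $(F,p)$ the perturbative argument misses — equivalently, when the displayed polynomial stays $\le\rand(F,p)$ near $\delta=0$ for every admissible $g$ — and (ii) finding a single construction that provably beats $\rand(F,p)$ uniformly over that set and all $p\in(0,1)$, with the extreme regimes $p\to 0$ and $p\to 1$ (best handled via $t_{\mathrm{ind}}(F,W)=t_{\mathrm{ind}}(\overline F,1-W)$, or a different choice of cluster densities) needing the most care.
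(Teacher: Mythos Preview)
Your perturbative setup is exactly right and matches the paper: pass to graphons, expand $\rho_F(W_p+\delta g)$ as $\rand(F,p)+\sum_H P_{H,F}(p)\,t(H,g)$, restrict to balanced $g$ so only subgraphs of minimum degree $\ge 2$ survive, and observe that the $K_3$ term handles all but finitely many $p$ (the paper gets at most three exceptional $p$ per $F$). You have also correctly identified the obstruction at $F=C_5$, $p=1/2$.

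The gap is your diagnosis of that obstruction and your proposed remedy. You write that for such $(F,p)$ ``the constant graphon $W_p$ is an honest local maximum \ldots\ so no perturbation can work and one must instead exhibit a graphon far from $W_p$.'' This is false, and it is precisely the point of the paper. What is true is that $W_p$ is a local maximum \emph{along every fixed line} $\{W_p+\eps g:\eps\in\R\}$; it is \emph{not} a local maximum in the full space of graphons of edge density $p$. The paper proves the stronger statement that for every $F$ with $|V(F)|\ge 3$ and every $p\in(0,1)$ there is a perturbation $\Delta$ of arbitrarily small $L^\infty$ norm with $\rho(\Delta)=0$ and $\rho_F(W_p+\Delta)>\rand(F,p)$. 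The trick is to let $\Delta$ depend \emph{non-linearly} on the smallness parameter, specifically via tensor powers:
\[
\Delta \;=\; \delta\cdot B\otimes(\lambda U_m)^{\otimes N}\otimes W,
\]
where $B$ is balanced, $U_m$ is a carefully constructed kernel, and $N$ is taken large. Since $t(H,U^{\otimes N})=t(H,U)^N$, taking $N$ large makes the sum $\sum_H P_{H,F}(p)\,t(H,\Delta)$ dominated by the single $H$ maximizing $|t(H,U_m)|$. The heart of the argument is the construction of $U_m$ (a Gauss-sum/quadratic-form kernel in the spirit of Jagger--\v{S}\v{t}ov\'{\i}\v{c}ek--Thomason) with the property that $t(K_m,U_m)<0$ and $|t(K_m,U_m)|>|t(G,U_m)|$ for every non-clique $G$ of minimum degree $\ge 2$. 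This forces the dominant term to be a clique, and since cliques are not norming one can then flip its sign by tensoring with one more factor $W$.

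Your fallback plan of a global blow-up construction is therefore unnecessary, and as you yourself note it is not carried out: you would need to beat $\rand(F,p)$ uniformly over an implicit set of $(F,p)$ you have not characterized, and the sketch you give does not come close to a proof. The missing idea is the tensor-power amplification together with the existence of a kernel that singles out cliques.
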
 

We observe that if $|V(F)| \leq 2$ then for all $G$, $\rho(F,G)$ is a function solely of the edge density $\rho(G)$ and so the assumption of $|V(F)| \geq 3$ is required.

Understanding whether random constructions are close to optimal is one of the main themes in extremal combinatorics and related fields.  Perhaps the most closely related example is the Sidorenko conjecture \cite{Sidorenko, Sidorenko2}, which predicts that the number of (not-necessarily-induced) copies of a bipartite graph in a graph of edge density $p$ is \emph{minimized} in the binomial random graph.  Despite significant partial progress, Sidorenko's conjecture remains open in this full level of generality.  We note that in contrast to the Sidorenko conjecture, we show in \cref{thm:main} that the binomial random graph is \emph{never} the extremizer for inducibility.

In fact, we prove the stronger statement (\cref{thm:main-local}) that for each finite labeled graph $F$ with $|V(F)| \geq 3$ and for all $p \in (0,1)$, the constant graphon $W_p \equiv p$ is not a local maximizer of the function $W \mapsto \rho(F,W) := \rho_F(W)$, suitably defined for graphons. Before stating this formally, we introduce some terminology.

Consider a finite labeled graph $F$. Identify its vertex set $V(F)$ with $[m]$ and write $E$ for its edge set.  Let $\Ebar := \binom{[m]}{2} \setminus E$ be the set of non-edges of $F$.  Recall that a \emph{graphon} is a symmetric measurable function $W:[0,1]^2 \to [0,1]$.  For a graphon $W$, the edge density is given by
\begin{equation*}
    \rho(W) := \int_{[0,1]^2}W(x_1,x_2)\,dx_1\,dx_2
\end{equation*}
and the induced density of $F$ in $W$ is given by
\begin{equation} \label{eq:induced-density}
    \rho_F(W) := \int_{[0,1]^m} \prod_{e \in E} W(x_{e_1},x_{e_2}) \prod_{f \in \overline{E}}(1 - W(x_{f_1},x_{f_2})) \,dx_1 dx_2\cdots dx_m\,.
\end{equation}
We note that the constant graphon $W_p \equiv p$ is the limit of the random graphs $G(n,p)$ and that $\rho_F(W_p) = \rand(F,p)$. A \emph{kernel} is a bounded symmetric measurable function $W: [0,1]^2 \to \R$.  Of particular importance will be \emph{balanced} kernels, i.e.\ those that satisfy $\int_0^1 W(x,y)\,dy = 0$ for almost all $x \in [0,1]$. 

It follows from standard considerations (e.g.~\cite[Lemma~2.4]{lovasz2006limits}) that $I(F,p)$ can be recast as an optimization problem over graphons.
\begin{fact}
\label{fact:graphons}
    For every finite labeled graph $F$ and $p \in [0,1]$ we have $$I(F,p) = \sup_W\{ \rho_F(W) : \rho(W) = p\}\,.  $$
\end{fact}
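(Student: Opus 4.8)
The plan is to prove the two inequalities
\[
I(F,p)\ \le\ \sup_W\{\rho_F(W):\rho(W)=p\}
\qquad\text{and}\qquad
\sup_W\{\rho_F(W):\rho(W)=p\}\ \le\ I(F,p)
\]
separately, using only standard facts about dense graph limits \cite{lovasz2006limits}: compactness of the space of graphons in the cut metric, continuity of subgraph densities along cut-convergent sequences, and the almost-sure convergence of $W$-random graphs to $W$. None of these steps is genuinely hard, so the task is really to assemble the right off-the-shelf tools and to be careful about one bookkeeping point.

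For the first inequality I would start from an arbitrary sequence $\{G_n\}$ witnessing a value $y$ in the definition of $I(F,p)$ --- that is, $|V(G_n)|\to\infty$, $\rho(G_n)\to p$, and $\rho(F,G_n)\to y$ --- and produce a feasible graphon attaining $y$. By compactness, after passing to a subsequence the associated graphons $W_{G_n}$ converge to some graphon $W$ in the cut metric. Continuity of the edge density gives $\rho(W)=\lim_n\rho(G_n)=p$, so $W$ is feasible; continuity of the induced $F$-density gives $\rho_F(W)=\lim_n\rho_F(W_{G_n})$, and this limit equals $y$ once one observes that $\rho_F(W_{G_n})$ (the ``with repetitions'' version of the induced density) differs from $\rho(F,G_n)=N(F,G_n)/(|V(G_n)|)_{|V(F)|}$ by $O(1/|V(G_n)|)$, which vanishes since $|V(G_n)|\to\infty$. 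Hence $y=\rho_F(W)\le\sup_W\{\rho_F(W):\rho(W)=p\}$, and taking the supremum over all such sequences proves the first inequality.

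For the second inequality I would fix a graphon $W$ with $\rho(W)=p$, put $y:=\rho_F(W)$, and take $G_n:=\mathbb G(n,W)$ to be the $W$-random graph on $n$ vertices. Almost surely $\mathbb G(n,W)\to W$ in the cut metric, so the same two continuity statements give that almost surely $\rho(G_n)\to\rho(W)=p$ and $\rho(F,G_n)\to\rho_F(W)=y$; in particular this event is nonempty, so some deterministic sequence of graphs realizes it, and such a sequence witnesses $\rho_F(W)=y\le I(F,p)$. (Alternatively, one can avoid citing the convergence of $W$-random graphs and argue by moments: conditioning on the $n$ i.i.d.\ vertex labels shows that the probability that a fixed injection $V(F)\hookrightarrow[n]$ is an induced embedding equals exactly the integral in \eqref{eq:induced-density}, so $\bE\,\rho(F,\mathbb G(n,W))=\rho_F(W)$, and one checks that the variance tends to $0$.) Taking the supremum over $W$ gives the second inequality, and combining the two completes the proof.

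The one point I would handle carefully is the continuity of $\rho_F$ in the cut metric, since the standard counting lemmas are phrased for homomorphism densities rather than for the induced density \eqref{eq:induced-density}. I would expand the product over non-edges by inclusion--exclusion,
\[
\rho_F(W)=\sum_{S\subseteq\overline{E}}(-1)^{|S|}\,t(F_S,W),
\]
where $F_S$ is the graph on $[m]$ with edge set $E\cup S$ and $t(\cdot,\cdot)$ is homomorphism density; since each $t(F_S,\cdot)$ is Lipschitz in the cut metric with constant at most $|E(F_S)|$, $\rho_F$ is a finite linear combination of cut-continuous functionals and hence itself cut-continuous (the corresponding statement for the edge density being the trivial case $F=K_2$). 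I would also remark in passing that the same compactness-and-continuity argument shows that the supremum in \cref{fact:graphons} is attained, which is what makes it meaningful to ask later whether the constant graphon $W_p\equiv p$ is a local maximizer of $\rho_F$ on $\{W:\rho(W)=p\}$.
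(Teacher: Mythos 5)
The paper does not prove \cref{fact:graphons}; it simply labels it a ``Fact'' and cites \cite[Lemma~2.4]{lovasz2006limits} as a source of the standard considerations. Your proof is a correct and complete write-up of exactly those standard considerations: compactness of the graphon space in the cut metric plus the counting lemma for one direction, and $W$-random graphs (or the equivalent second-moment argument) for the other, together with the right bookkeeping---the inclusion--exclusion reduction of $\rho_F$ to homomorphism densities to get cut-continuity, and the $O(1/|V(G_n)|)$ discrepancy between the injective density $\rho(F,G_n)$ and the integral $\rho_F(W_{G_n})$. This matches the intent of the paper's citation, so there is nothing to compare beyond saying your argument supplies the details the authors chose to omit.
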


The following result shows that, in the setting of \cref{thm:main}, $W_p$ is not even a local maximum. By \cref{fact:graphons}, it immediately implies \cref{thm:main}.

\begin{theorem}
\label{thm:main-local}
For each finite labeled graph $F$ with $|V(F)| \geq 3$, for all $p \in (0,1)$, and for all $\delta \leq \min\{p, 1-p\}$, there exists a kernel $\Delta = \Delta(F,p,\delta): [0,1]^2 \to [-\delta, \delta]$ such that
\begin{itemize}
    \item $\rho(\Delta) = 0$, and hence, $\rho(W_p + \Delta) = \rho(W_p) = p$,
    \item $\rho_F(W_p + \Delta) > \rho_F(W_p) = \rand(F,p)$.
\end{itemize}
\end{theorem}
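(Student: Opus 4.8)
The plan is to expand $\rho_F(W_p+\Delta)$ in a balanced kernel $\Delta$ and read off the leading correction. Identify $V(F)=[m]$, write $E=E(F)$ and $\Ebar=\binom{[m]}{2}\setminus E$, substitute $W(x_i,x_j)=p+\Delta(x_i,x_j)$ for $\{i,j\}\in E$ and $1-W(x_i,x_j)=(1-p)-\Delta(x_i,x_j)$ for $\{i,j\}\in\Ebar$ into \eqref{eq:induced-density}, and expand the product. Writing $t(S,\Delta):=\int_{[0,1]^m}\prod_{\{i,j\}\in S}\Delta(x_i,x_j)\,dx_1\cdots dx_m$ for $S\subseteq\binom{[m]}{2}$, this gives the exact finite identity
\begin{equation*}
\rho_F(W_p+\Delta)=\sum_{S\subseteq\binom{[m]}{2}}(-1)^{|S\cap\Ebar|}\,p^{|E\setminus S|}(1-p)^{|\Ebar\setminus S|}\,t(S,\Delta).
\end{equation*}
The key structural point is that $t(S,\Delta)=0$ whenever $S$ has a vertex of degree $\le 1$ on its support (integrate out that vertex, or an isolated edge, and use $\int_0^1\Delta(x,y)\,dy=0$). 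Hence only $S=\emptyset$, contributing $\rho_F(W_p)=\rand(F,p)$, and subgraphs $S$ of minimum degree $\ge 2$ survive; the smallest nonempty such $S$ is a triangle, so the first- and second-order contributions vanish and the leading correction is the cubic form $\Phi_3(\Delta)=\gamma_3(F,p)\,t(K_3,\Delta)$, where (using that $t(S,\Delta)$ depends only on the isomorphism type of $S$) $\gamma_3(F,p)=\sum_{j=0}^{3}n_j(-1)^{3-j}p^{|E|-j}(1-p)^{|\Ebar|-3+j}$ with $n_j$ the number of vertex-triples of $F$ spanning exactly $j$ edges.

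The generic case is $\gamma_3(F,p)\ne 0$, handled by a rank-one perturbation. Put $\phi=\indf{[0,1/2)}-\indf{[1/2,1]}$ and $\Delta=c\,\phi\otimes\phi$. This is a balanced kernel with $\rho(\Delta)=(\int_0^1\phi)^2=0$, $\|\Delta\|_\infty=|c|$, and $t(S,\Delta)=c^{|S|}\prod_v m_{\deg_S(v)}$ where $m_k=\int_0^1\phi^k$ equals $1$ for $k$ even and $0$ for $k$ odd; in particular $t(K_3,\Delta)=c^3$ and $|t(S,\Delta)|\le|c|^{|S|}$ for all $S$, so $|\Phi_k(\Delta)|\le C_F|c|^k$ for every $k\ge 3$ with $C_F$ depending only on $F,p$. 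Choosing $\operatorname{sign}(c)=\operatorname{sign}(\gamma_3(F,p))$ and $0<|c|\le\delta$ small enough (depending only on $F,p$) gives $\rho_F(W_p+\Delta)=\rand(F,p)+|\gamma_3(F,p)|\,|c|^3+O(|c|^4)>\rand(F,p)$, which proves the theorem whenever $\gamma_3(F,p)\ne 0$.

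It remains to treat the degenerate locus $\gamma_3(F,p)=0$, and this is the main obstacle. For fixed $F$ this is a finite set of $p$'s (the positive roots, in $x=(1-p)/p$, of $n_0-n_1x+n_2x^2-n_3x^3$). At such a $p$ one passes to the next surviving form $\Phi_{k^*}$, $k^*=\min\{k\ge 4:\Phi_k\not\equiv 0\}$. Since $C_4$ is the only $4$-edge graph of minimum degree $\ge 2$, $\Phi_4(\Delta)=\gamma_4(F,p)\,t(C_4,\Delta)$, and $t(C_4,\Delta)=\operatorname{tr}(T_\Delta^4)\ge 0$ (with $T_\Delta$ the Hilbert--Schmidt operator of kernel $\Delta$), vanishing only at $\Delta=0$; so if $\gamma_4(F,p)>0$ the rank-one construction still works, if $k^*$ is odd one flips the sign of $c$, and if the first surviving form is again a sign-definite nonzero form on rank-one perturbations one proceeds similarly. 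The genuinely hard sub-case — the one I expect to require a different idea — is when the first surviving form has even degree and the unfavorable sign; this does occur, e.g. $F=C_5$ at $p=\tfrac12$ gives $\gamma_3=0$ and $\Phi_4(\Delta)=-\tfrac{5}{64}\,t(C_4,\Delta)\le 0$, so no infinitesimal balanced perturbation increases $\rho_{C_5}$. One is then forced to use a perturbation of non-negligible norm: with $\delta=\min\{p,1-p\}$, exhibit an explicit graphon $U$ with $\rho(U)=p$, $\|U-W_p\|_\infty\le\delta$ and $\rho_F(U)>\rand(F,p)$ — for instance an iterated balanced blow-up of a small graph that is a strictly better inducer of $F$ at density $p$ than $G(n,p)$ — and set $\Delta=U-W_p$. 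Showing that such a construction exists and beats $\rand(F,p)$ at every degenerate value of $p$ is the crux of the argument in this range, and I would expect it to be carried out either by an explicit finite computation or by a perturbation/continuity argument in the blow-up parameters.
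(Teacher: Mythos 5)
Your analysis of the linear-perturbative approach is correct and closely parallels the paper's warm-up (\cref{prop:three-points}): the expansion over subgraphs $S\subseteq\binom{[m]}{2}$, the vanishing of terms with a degree-$\le 1$ vertex for balanced $\Delta$, the identification of $\Phi_3(\Delta)=\gamma_3(F,p)\,t(K_3,\Delta)$ as the leading correction, and the rank-one kernel $c\,\phi\otimes\phi$ all check out; $\gamma_3$ is the paper's $P_{K_3,F}$. You also correctly identify the obstruction at the degenerate locus and locate the worst case $F=C_5,\ p=\tfrac12$, which is exactly the paper's \cref{fact:C5}. The gap is the degenerate case, which you explicitly concede: you propose a ``perturbation of non-negligible norm'' (e.g.\ an iterated blow-up beating $\rand(F,p)$ by a macroscopic amount) but give no construction and no proof that one exists. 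Moreover, even granting existence, that route does not prove the theorem as stated: \cref{thm:main-local} requires a perturbation $\Delta$ with $\|\Delta\|_\infty\le\delta$ for \emph{every} $\delta\le\min\{p,1-p\}$, including arbitrarily small $\delta$, whereas a fixed macroscopic graphon $U$ yields only a single $\Delta=U-W_p$ of fixed size. Since \cref{fact:C5} shows that for $C_5,\ p=\tfrac12$ no line $\{W_p+\eps\Delta_0\}$ increases $\rho_F$ near $\eps=0$, the small-$\delta$ perturbation must depend nonlinearly on $\delta$, and that nonlinearity is precisely what is missing from your argument.

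The paper's resolution, which is the core of the result and is absent from your proposal, is a different mechanism: construct (via a quadratic form over $\mathbb{F}_p$, building on Jagger--\v{S}tov\'i\v{c}ek--Thomason) a kernel $U_m$ satisfying $t(K_m,U_m)+|t(G,U_m)|<0$ for all non-clique $G$ of minimum degree $\ge 2$ (\cref{prop:key}); observe that $P_{K_m,F}(p)=(-1)^{|\overline{E(F)}|}$ is never zero, so the $K_m$-term is always available; then take $\Delta=\delta\cdot B\otimes(\lambda U_m)^{\otimes N}\otimes W$ with $B$ a balanced sign-pattern kernel, a large tensor power $N$ to make a single clique term dominate the expansion, and a final factor $W$ (either the constant graphon or $U_z$) to flip the sign if needed. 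The overall factor of $\delta$ scales the perturbation into $[-\delta,\delta]$ regardless of how small $\delta$ is. None of these ingredients — the clique-dominating kernel, the tensor-power trick to isolate a single clique, or the sign-correcting factor — appears in your proposal, so the degenerate case (and with it the theorem) remains unproved.
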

Note that, by the restriction on the range of $\Delta$, $W_p + \Delta$ is a graphon.

On the other hand, we show (see \cref{sec:limit-perturbation}) that for $F = C_5$ and $p = 1/2$, for \emph{any} non-zero symmetric measurable function $\Delta: [0,1]^2 \to [-1,1]$ with $\rho(\Delta) = 0$, there exists $\delta_0 = \delta_0(\Delta) > 0$ such that for all $\delta \leq \delta_0$,
\[\rho(W_p + \delta \Delta) < \rho(W_p).\]
In words, while there are examples for which $W_p$ is a local maximum along any line, it is never a local maximum.  This obstacle is overcome in \cref{thm:main-local} by taking $\Delta$ to have non-linear dependence on $\delta$.

\subsection{Overview of the Proof}
The general strategy of the proof of \cref{thm:main-local} is to apply \cref{eq:induced-density} for a graphon of the form $W_p + \Delta$ and expand in terms of $\Delta$.  To start with, for a finite labeled graph $H$ we fix an orientation of its edges $E(H)$ arbitrarily and define 
$$t(H,\Delta) := \int_{[0,1]^{|V_H|}} \prod_{e \in E(H)}\Delta(x_{e_1},x_{e_2}) \,d\mathbf{x}\,. $$
Note that if $H_0$ and $H_1$ are isomorphic, then $t({H_0},\Delta) = t({H_1},\Delta)$ for all $\Delta$.  In the case when $\Delta$ is a graphon, the function $t(H,\Delta)$ counts the density of (not necessarily induced) copies of $H$ in $\Delta$.  Expanding \cref{eq:induced-density} in terms of $\Delta$ shows that
\begin{equation}\label{eq:expansion-intro}
    \rho_F(W_p + \Delta) = \rand(F,p) + \sum_{H} P_{H,F}(p) \cdot t(H,\Delta)
\end{equation}
where the sum is over non-empty subgraphs of $K_{v(F)}$ without isolated vertices and $P_{H,F}(p)$ are polynomials in $p$ depending on $H$ and $F$ (see \cref{lem:expansion}). 

The task is now clear: one wishes to choose $\Delta$ with $\rho(\Delta) = 0$ and $W_p + \Delta \in [0,1]$ so that the right-hand sum is positive.  A natural first idea in this direction is to choose $\Delta = \eps \Delta_0$ for some suitable kernel $\Delta_0$ and some small $\eps$.  A benefit of this is that we have the simple identity $t(H,\eps \Delta_0) = \eps^{e(H)} t(H,\Delta_0),$ and so taking $\eps$ small enough favors the terms in the sum indexed by graphs $H$ with few edges. Then,  one would simply like to make the leading order term positive. A major obstacle to this basic ``linear perturbative'' approach is that for certain graphs $H$ we have $t(H,\Delta) > 0$ whenever $\Delta \not\equiv 0$; a classical example is the case of $H = C_4$, although there are many such examples (for instance the wider class of \emph{norming} graphs, see \cite{lovaszBook}).  As such, if one applies $\Delta = \eps \Delta_0$ for balanced $\Delta_0$, one may find themselves in the unlucky situation where the dominant non-zero term in the expansion in \cref{eq:expansion-intro} is the term corresponding to $H = C_4$; since we have no control on the sign of $t(C_4,\Delta)$---it is necessarily positive---we are at the mercy of the term $P_{C_4,F}(p)$, which indeed can be negative.  We will show that this sequence of seeming-coincidences occurs in the case of $F = C_5$ and $p = 1/2$: all lower order terms cancel out for a balanced kernel, the only contributing graph with $4$ edges is $H = C_4$, and the coefficient $P_{C_4,C_5}(1/2)$ is negative.  This example is detailed in \cref{sec:limit-perturbation}.  Still, we show in \cref{sec:warm-up} that this basic linear perturbative approach can be made to work in the vast majority of cases, namely for all but at most three values of $p \in (0,1)$ that depend on $F$. 

As a starting point to see how to get around this obstacle, \cref{lem:expansion} will in fact show that if we take $m := v(F)$ then  $P_{K_m,F}(p) \neq 0$ for all $p \in (0,1)$, thereby providing a concrete non-zero term in the expansion. Ideally, we would like to choose some balanced $\Delta$ for which $t(K_m,\Delta)$ is large and of the correct sign.  Building off of an algebraic construction of  Jagger, {\v{S}}{\v{t}}ov{\'\i}{\v{c}}ek, and Thomason \cite{jagger1996multiplicities}, we construct a kernel  $U_m$ for which $t(K_m,U_m) + |t(G,U_m)| < 0$ for all non-cliques $G$ with minimum degree at least 2 (see \cref{prop:key}). Consequently, restricting the sum in \cref{eq:expansion-intro} to subgraphs $H$ with minimum degree at least 2 (which is easily done, as we will see later) for which $P_{H,F}(p)\neq 0$, and slightly rescaling $U_m$ to $U'_m$ if needed, the maximum absolute value of $t(H,U'_m)$ is attained uniquely at \emph{some} clique. Here, we crucially use that the set of connected subgraphs for which $P_{H,F}(p) \neq 0$ contains at least one clique (in particular $K_m$, as discussed above). 

At this point, there are two remaining obstacles. First, while we have guaranteed that the magnitude of $t(H,U'_m)$ in the expansion is maximized at a unique clique, this may not be the case for $P_{H,F}(p)\cdot t(H,U'_m)$. Second, even if the sum in the expansion were dominated by $P_{H,F}(p)\cdot t(H,U'_m)$ for a unique clique $H$, it might be the case that this term is negative. To get around the first obstacle, we use the ``tensor-power trick'': since $t(H,(U'_m)^{\otimes k}) = t(H,U'_m)^k$, we can work with a sufficiently high tensor power $(U'_m)^{\otimes k}$ of the special kernel $U'_m$ to ensure that the sum in the expansion is dominated by $P_{H,F}(p)\cdot t(H,(U'_m)^{\otimes k})$ for a unique clique $H$. For the second obstacle, we use that cliques are not norming, so that there exists a non-zero kernel $W$ such that $t(H,W) < 0$ for our unique clique $H$. By tensoring with this kernel $W$, we can fix the sign problem; our choice of sufficiently large $k$ guarantees that the sum is still dominated by the (now positive) term corresponding to $H$.

\subsection{Organization of the paper} In \cref{sec:warm-up} we formally introduce the perturbative approach outlined above.  We then prove a weaker version of \cref{thm:main} showing that for any $F$, there are at most three possible values of $p \in (0,1)$ for which $W_p$ could be a (local) maximizer of $\rho_F(W)$ (see \cref{prop:three-points}) using a linear perturbation. In fact, we show that there exists a universal ``direction'' $\Delta$ such that, outside of these three values of $p$, $W_p$ is not even a local maximizer along the line $W_p + t\Delta$.  As mentioned above, this approach fails for $F = C_5, p = 1/2$; we prove this in \cref{sec:limit-perturbation}.

Following this warm-up section, we set out to prove our main theorem \cref{thm:main-local}.  First, we take for granted our construction of the kernel $U_m$ that favors cliques (\cref{prop:key}) and prove \cref{thm:main-local} in \cref{sec:main-proof}.  Finally, we then prove \cref{prop:key} in \cref{sec:proof-key-prop} by building off of an algebraic construction of  Jagger, {\v{S}}{\v{t}}ov{\'\i}{\v{c}}ek, and Thomason \cite{jagger1996multiplicities}.

\section{The linear perturbative approach and its limitations}
\label{sec:warm-up}

In this section, as a warm-up, we prove the following weakening of \cref{thm:main-local}.

\begin{proposition}\label{prop:three-points}
   There exists a universal kernel $\Delta: [0,1]^2 \to [-2,2]$ for which the following holds. For each finite labeled graph $F$ with $|V(F)| \geq 3$, there are at most three points  $p_1,p_2,p_3 \in (0,1)$ so that  for all $p \in (0,1) \setminus \{p_1,p_2,p_3\}$ there is $\sigma \in \{-1,1\}$ so that  for all $0 \leq \delta \leq \delta_0 = \delta_0(F,p) > 0$, $$\rho_F(W_p+ \sigma \delta \Delta) > \rho_F(W_p) = \rand(F,p).$$ 
\end{proposition}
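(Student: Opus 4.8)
The plan is to make the linear perturbative scheme of the introduction precise, with the universal direction $\Delta$ taken to be \emph{any} fixed balanced kernel whose triangle density is nonzero. A convenient concrete choice is $\Delta(x,y):=r(x)r(y)$ with $r:=\indf{[0,1/2)}-\indf{[1/2,1]}$: this is a kernel with values in $\{-1,1\}\subseteq[-2,2]$, it is balanced because $\int_0^1 r=0$ (so $\rho(\Delta)=0$, and $W_p+\sigma\delta\Delta$ is a graphon of edge density $p$ whenever $\delta\le\min\{p,1-p\}$), and $t(C_3,\Delta)=\bigl(\int_0^1 r^2\bigr)^3=1\ne 0$. The point is that this single property, $t(C_3,\Delta)\ne0$, is all we shall use, and it does not refer to $F$; the dependence on $F$ will be confined to one polynomial coefficient, whose roots will be the (at most three) exceptional values of $p$.

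Now fix $F$ with $v(F)=m\ge 3$, write $E,\Ebar$ for its edges and non-edges, and apply the expansion \cref{eq:expansion-intro} of \cref{lem:expansion} to $W_p+\sigma\delta\Delta$. Since $t(H,\sigma\delta\Delta)=(\sigma\delta)^{e(H)}t(H,\Delta)$, and since a balanced kernel annihilates any $H$ with a vertex of degree $1$ (integrate that leaf out first), only subgraphs $H\subseteq K_m$ in which every vertex has degree at least $2$ contribute; each such $H$ has $e(H)=\tfrac12\sum_v\deg_H(v)\ge v(H)\ge 3$, with equality only for $H\cong C_3$. Hence, as $\delta\to 0$,
\[
\rho_F(W_p+\sigma\delta\Delta)-\rand(F,p)=\sigma\,\delta^{3}\,P_{C_3,F}(p)\,t(C_3,\Delta)+O(\delta^{4}),
\]
the error being a finite sum over the remaining contributing subgraphs (all with $e(H)\ge 4$), with coefficients depending only on $F,p,\Delta$. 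Provided $P_{C_3,F}(p)\ne 0$, I would take $\sigma:=\operatorname{sign}\bigl(P_{C_3,F}(p)\,t(C_3,\Delta)\bigr)\in\{-1,1\}$; since $\sigma^{3}=\sigma$, the leading coefficient becomes $\bigl|P_{C_3,F}(p)\,t(C_3,\Delta)\bigr|>0$, so the displayed quantity is strictly positive for all $\delta\in(0,\delta_0]$, where $\delta_0=\delta_0(F,p)>0$ is chosen small enough that moreover $W_p+\sigma\delta\Delta$ is a graphon. This is the claimed conclusion, so it remains only to bound the zero set of $p\mapsto P_{C_3,F}(p)$ in $(0,1)$.

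To do that, I would compute this coefficient directly from \cref{eq:induced-density}: selecting the $\Delta$-factor on exactly the three pairs inside a triple $T\in\binom{[m]}{3}$ and the constant factor elsewhere ($p$ on the remaining edges of $F$, and $1-p$ with a sign on the remaining non-edges) contributes $(-1)^{j_T}p^{|E|-3+j_T}(1-p)^{|\Ebar|-j_T}t(C_3,\Delta)$, where $j_T$ is the number of non-edges of $F$ among the three pairs inside $T$. Summing over $T$ and grouping by $j_T$,
\begin{align*}
P_{C_3,F}(p)&=\sum_{T\in\binom{[m]}{3}}(-1)^{j_T}\,p^{\,|E|-3+j_T}(1-p)^{\,|\Ebar|-j_T}\\
&=p^{\,|E|-3}(1-p)^{\,|\Ebar|-3}\,Q(p),\qquad Q(p):=\sum_{j=0}^{3}(-1)^{j}\,n_j\,p^{j}(1-p)^{3-j},
\end{align*}
where $n_j:=\#\{T\in\binom{[m]}{3}:j_T=j\}$, the second line being read as an identity of functions on $(0,1)$, on which the prefactor $p^{|E|-3}(1-p)^{|\Ebar|-3}$ never vanishes (even if an exponent is negative). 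Thus on $(0,1)$ the zeros of $P_{C_3,F}$ are exactly those of the polynomial $Q$, which has degree at most $3$; and $Q\not\equiv 0$, since $\{p^{j}(1-p)^{3-j}\}_{j=0}^{3}$ is a basis of the polynomials of degree at most $3$, so $Q\equiv 0$ would force all $n_j=0$, contradicting $\sum_j n_j=\binom{m}{3}\ge 1$. Hence $P_{C_3,F}$ has at most three roots $p_1,p_2,p_3$ in $(0,1)$ (pad with arbitrary points if fewer), completing the argument.

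The only steps with real content are (i) that, after removing the harmless factor $p^{|E|-3}(1-p)^{|\Ebar|-3}$, the triangle coefficient is a cubic—which is exactly where the number ``three'' comes from—and (ii) that this cubic is not identically zero; the rest (the explicit $\Delta$, the vanishing of degree-$1$ contributions, absorbing the tail into $O(\delta^{4})$, and the choice of $\sigma$) is routine, and the universality of $\Delta$ is free precisely because its sole requirement, $t(C_3,\Delta)\ne 0$, does not see $F$. As a sanity check: when $v(F)=3$ the unique triple gives $P_{C_3,F}\equiv\pm 1$, so there are no exceptional points; while for $F=C_5$ one gets $Q(p)=5p(1-p)(2p-1)$, whose only root in $(0,1)$ is $p=\tfrac12$—exactly the case that necessitates the genuinely non-linear argument of the later sections.
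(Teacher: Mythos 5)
Your proof is correct and follows essentially the same route as the paper: fix one balanced kernel $\Delta$ with $t(K_3,\Delta)\neq 0$, observe that balancedness kills every subgraph with a degree-$1$ vertex so that the leading term in the $\delta$-expansion is $\sigma\delta^3 P_{K_3,F}(p)t(K_3,\Delta)$, and then bound the zero set of the $K_3$-coefficient by a cubic. The only cosmetic difference is the choice of $\Delta$ (you use the rank-one kernel $r\otimes r$; the paper uses an explicit $3\times3$ block kernel with $t(K_3,\Delta)=28/27$), and your justification that the cubic $Q$ is not identically zero, via the Bernstein basis $\{p^j(1-p)^{3-j}\}$, is a welcome piece of detail that the paper leaves implicit.
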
 

The specific values of $p_1,p_2,p_3$ are given as the zeros of an explicit cubic polynomial whose coefficients depend on $F$. 

Note that, in the statement of \cref{prop:three-points}, we are able to use an appropriate scaling of a universal kernel to show that $W_p$ is not a local maximizer for $p \in (0,1)\setminus \{p_1,p_2,p_3\}$. The next proposition shows that there is no universal kernel which can be used to certify that $W_p$ is not a local maximizer for all $F$ with $|V(F)|\geq 3$ and all $p \in (0,1)$. 

\begin{prop}\label{fact:C5} (see \cref{sec:limit-perturbation})
    Let $p = 1/2$ and $F = C_5$.  For every kernel $\Delta$ with $\iint \Delta(x,y)\,dx\,dy = 0$ and $\Delta \not\equiv 0$ there are constants $c(\Delta) = c > 0$ and $\eps_0(\Delta) = \eps_0 > 0$ so that for all $|\eps| \leq \eps_0$ we have $$\rho_F(W_p + \eps \Delta) \leq \rand(F,p) - c \eps^4 \,.$$ 
\end{prop}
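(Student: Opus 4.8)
The plan is to use the expansion \eqref{eq:expansion-intro} with $\Delta$ replaced by $\eps \Delta$, keeping in mind that $t(H, \eps\Delta) = \eps^{e(H)} t(H,\Delta)$, and to show that for $F = C_5$ and $p = 1/2$ all contributions of order $\eps^2$ and $\eps^3$ vanish identically for a balanced $\Delta$, while the $\eps^4$ term is negative and bounded away from zero. Concretely, I would first invoke Lemma \ref{lem:expansion} to write
\[
\rho_{C_5}(W_{1/2} + \eps\Delta) = \rand(C_5,1/2) + \sum_{H} P_{H,C_5}(1/2)\, \eps^{e(H)}\, t(H,\Delta),
\]
where the sum is over non-empty subgraphs of $K_5$ with no isolated vertices. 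The key structural input is that $\Delta$ is balanced, i.e.\ $\int_0^1 \Delta(x,y)\,dy = 0$ for a.e.\ $x$; this kills $t(H,\Delta)$ for every $H$ that has a vertex of degree $1$ (integrate out the pendant vertex first). So the only surviving $H$ are those with minimum degree at least $2$.

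Next I would enumerate, among subgraphs of $K_5$ without isolated vertices and with minimum degree $\geq 2$, those with at most $4$ edges. With $\delta(H)\geq 2$ we need $e(H)\geq \lceil 3v(H)/2 \rceil$, so on $v(H)=3$ vertices the only option is $K_3$ with $3$ edges, and on $v(H)=4$ vertices the only option with $\leq 4$ edges is $C_4$ with $4$ edges; there is no such $H$ with exactly $2$ edges, and none on $5$ vertices with $\leq 4$ edges. Thus the expansion truncates as
\[
\rho_{C_5}(W_{1/2}+\eps\Delta) = \rand(C_5,1/2) + P_{K_3,C_5}(1/2)\,\eps^3\, t(K_3,\Delta) + P_{C_4,C_5}(1/2)\,\eps^4\, t(C_4,\Delta) + O(\eps^5).
\]
The heart of the argument is then the claim that $P_{K_3,C_5}(1/2) = 0$: this is a direct computation of the coefficient polynomial from Lemma \ref{lem:expansion} (which, as the overview promises, gives $P_{H,F}(p)$ explicitly in terms of $p$), evaluated at $p = 1/2$; I expect the symmetry $p \leftrightarrow 1-p$ at $p = 1/2$ combined with $C_5$ being self-complementary to force this vanishing, and more generally one should check that every $H$ with an odd number of edges has $P_{H,C_5}(1/2) = 0$ by this same complementation symmetry (this also handles the $O(\eps^5)$: the next surviving graphs have $e(H) \in \{5,6,\dots\}$, and the odd ones drop out, so the error is genuinely $O(\eps^4 \cdot \eps^2) = O(\eps^6)$ beyond the $C_4$ term, or at worst one must control an $\eps^5$ term with even $e(H)$—there is none since $\delta(H)\geq 2$ forces $e(H) = 5$ only on $5$ vertices, which is odd in the relevant cases; this bookkeeping is the fiddly part). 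Finally I would compute $P_{C_4,C_5}(1/2)$ and verify it is strictly negative, and recall the classical fact that $C_4$ is a norming graph so that $t(C_4,\Delta) > 0$ whenever $\Delta \not\equiv 0$; setting $c = -P_{C_4,C_5}(1/2)\, t(C_4,\Delta)/2 > 0$ and absorbing the higher-order terms into $c\eps^4$ for $|\eps|$ small gives the bound $\rho_{C_5}(W_{1/2}+\eps\Delta) \leq \rand(C_5,1/2) - c\eps^4$.

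The main obstacle, I expect, is twofold and bookkeeping-heavy rather than conceptual: first, carefully extracting and simplifying the coefficients $P_{K_3,C_5}(p)$ and $P_{C_4,C_5}(p)$ from Lemma \ref{lem:expansion} and checking the vanishing and sign at $p=1/2$; and second, making the error-term control airtight — one must argue that no subgraph $H$ of $K_5$ with $\delta(H)\geq 2$, $e(H) = 5$, and $P_{H,C_5}(1/2) \neq 0$ contributes at order $\eps^5$ (either because no such $H$ exists, or because $P_{H,C_5}(1/2)=0$ by self-complementarity of $C_5$ at $p = 1/2$), so that the first nonzero correction is genuinely the negative $\eps^4$ term. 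The norming property of $C_4$ — equivalently, $t(C_4,\Delta) = \int\!\!\int (\int \Delta(x,z)\Delta(z,y)\,dz)^2\,dx\,dy \geq 0$ with equality only if the associated operator is zero, i.e.\ $\Delta \equiv 0$ as a kernel — is standard and I would just cite it.
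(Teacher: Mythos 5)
Your argument correctly handles the case where $\Delta$ is \emph{balanced}, i.e.\ $\int_0^1 \Delta(x,y)\,dy = 0$ for a.e.\ $x$, and in that case your enumeration of low-edge subgraphs with minimum degree $\geq 2$, the vanishing of odd-edge coefficients at $p=1/2$ via self-complementarity, and the use of the $C_4$-norming inequality all match the paper's argument. However, there is a genuine gap: the hypothesis of the proposition is only that $\iint \Delta\,dx\,dy = 0$, which is strictly weaker than balancedness, yet you invoke balancedness as ``the key structural input'' without justification. For a general zero-mean $\Delta$ the terms $t(H,\Delta)$ with $H$ having a pendant vertex do not vanish; in particular $t(P_2,\Delta)$ can be nonzero (here $P_2$ is the path on three vertices), and the leading correction in the expansion is the order-$\eps^2$ term $P_{P_2,C_5}(1/2)\,\eps^2\,t(P_2,\Delta)$, which your proof never considers. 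The zero-mean hypothesis only kills graphs containing an isolated-edge component, such as two disjoint edges; it does not kill $P_2$.

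The fix (and the route the paper takes) is a case split on whether $\Delta$ is balanced, detected via $t(P_2,\Delta) = \int_0^1 \bigl(\int_0^1 \Delta(x,y)\,dy\bigr)^2 dx \geq 0$. If $t(P_2,\Delta) > 0$, one computes $P_{P_2,C_5}(1/2) = -5/128 < 0$, so the $\eps^2$ term is strictly negative and already dominates, giving an even stronger bound $-c\eps^2 \leq -c\eps^4$ for small $\eps$. If $t(P_2,\Delta) = 0$, then $\Delta$ is balanced and your argument applies. Without this dichotomy the proof is incomplete, since the proposition must hold for every kernel with $\iint \Delta = 0$, not just balanced ones.
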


\subsection{The linear perturbative approach}

Let  $\Delta:[0,1]^2 \to \R$ be a kernel.
For a finite labeled graph $H$, fix an orientation of its edges $E(H)$ arbitrarily and define $$t(H,\Delta) = \int_{[0,1]^{|V_H|}} \prod_{e \in E(H)}\Delta(x_{e_1},x_{e_2}) \,d\mathbf{x}\,. $$
Note that if $H_0$ and $H_1$ are isomorphic, then $t({H_0},\Delta) = t({H_1},\Delta)$ for all $\Delta$.  In the case when $\Delta$ is a graphon, the function $t(H,\Delta)$ counts the density of (not necessarily induced) copies of $H$ in $\Delta$.

Throughout this section, we fix a finite labeled graph $F$. We identify its vertex set $V(F)$ with $[m]$ and write $E$ for the set of its edges and $\Ebar$ for the set of its non-edges.  
\begin{lemma}
\label{lem:expansion}
For a kernel $\Delta:[0,1]^2 \to \R$, we have the expansion $$\rho_F(W_p + \Delta) = \rand(F,p) +\sum_{H} P_{H,F}(p) \cdot  t(H,\Delta)\,,$$
where the sum is over non-empty unlabeled subgraphs of the complete graph $K_m$ without isolated vertices and $P_{H,F}(p)$ are polynomials in $p$ depending only on $H$ and $F$. 
Further, the polynomials $P_{H,F}(p)$ are given by $$P_{H,F}(p) = \frac{\rand(F,p)}{(p(1-p))^{e(H)}} \sum_{j = 0}^{e(H)} (1-p)^{e(H) - j} (-p)^j n_j(H,F) $$
where $n_j$ counts the number of sets $\{e_1,\ldots,e_{e(H) - j},f_1,\ldots,f_j\}$ with $e_i \in E(F)$, $f_i \in \overline{E(F)}$ so that the graph given by $\{e_1,\ldots,e_{e(H) - j},f_1,\ldots,f_j\}$ is isomorphic to $H$. In particular, $P_{K_m, F}(p) = (-1)^{|\overline{E(F)}|}$.
\end{lemma}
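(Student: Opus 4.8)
The plan is a direct term-by-term expansion of the integrand in \cref{eq:induced-density} evaluated at $W=W_p+\Delta$, followed by collecting terms by isomorphism type. For each $e\in E$ write $(W_p+\Delta)(x_{e_1},x_{e_2})=p+\Delta(x_{e_1},x_{e_2})$, and for each $f\in\Ebar$ write $1-(W_p+\Delta)(x_{f_1},x_{f_2})=(1-p)-\Delta(x_{f_1},x_{f_2})$. Expanding the product over all $e\in E$ and $f\in\Ebar$ by choosing, for each $e\in E$, either the constant $p$ or the factor $\Delta(x_{e_1},x_{e_2})$, and for each $f\in\Ebar$, either the constant $(1-p)$ or the factor $-\Delta(x_{f_1},x_{f_2})$, produces a sum indexed by pairs $(S_E,S_F)$ with $S_E\subseteq E$ and $S_F\subseteq\Ebar$, in which the $(S_E,S_F)$-term equals
\[
p^{\,e(F)-|S_E|}\,(1-p)^{\,|\Ebar|-|S_F|}\,(-1)^{|S_F|}\prod_{e\in S_E\cup S_F}\Delta(x_{e_1},x_{e_2}).
\]

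I would then integrate over $[0,1]^m$. Since $\Delta$ is symmetric, the orientation of edges chosen in the definition of $t$ is immaterial, and since a coordinate $x_i$ occurring in no factor integrates out to $1$, the integral over $[0,1]^m$ of $\prod_{e\in S_E\cup S_F}\Delta(x_{e_1},x_{e_2})$ equals $t(H,\Delta)$, where $H$ is the graph on $[m]$ with edge set $S_E\cup S_F$ after deleting its isolated vertices; in particular $t$ depends only on the isomorphism class of $H$ and ignores isolated vertices, so labeled choices may be regrouped by unlabeled isomorphism type. The empty choice $S_E=S_F=\emptyset$ contributes exactly $p^{e(F)}(1-p)^{|\Ebar|}=\rand(F,p)$, the claimed constant term. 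For the rest, group the pairs $(S_E,S_F)$ by the isomorphism type $H$ of the associated graph (note $e(H)=|S_E|+|S_F|$) and set $j:=|S_F|$; the number of admissible $(S_E,S_F)$ with $|S_F|=j$ whose union induces a copy of $H$ is exactly the quantity $n_j(H,F)$ of the statement, since an admissible pair is the same data as a set $\{e_1,\dots,e_{e(H)-j},f_1,\dots,f_j\}$ with the $e_i\in E(F)$ and $f_i\in\Ebar$. Hence the coefficient of $t(H,\Delta)$ is $\sum_{j=0}^{e(H)}p^{\,e(F)-e(H)+j}(1-p)^{\,|\Ebar|-j}(-1)^j\,n_j(H,F)$.

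Finally, the closed form is obtained by pulling out the factor $\rand(F,p)/(p(1-p))^{e(H)}=p^{\,e(F)-e(H)}(1-p)^{\,|\Ebar|-e(H)}$ and using $|\Ebar|=\binom{m}{2}-e(F)$, which turns the coefficient into the stated $P_{H,F}(p)$. For $H=K_m$ the only admissible pair is $S_E=E$, $S_F=\Ebar$, so the sum over $j$ reduces to its single term with $j=|\Ebar|$ and $n_{|\Ebar|}(K_m,F)=1$; there the exponent of $p$ is $e(F)-\binom{m}{2}+|\Ebar|=0$, the exponent of $1-p$ is $|\Ebar|-|\Ebar|=0$, and the sign is $(-1)^{|\Ebar|}$, giving $P_{K_m,F}(p)=(-1)^{|\overline{E(F)}|}$. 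The argument is in the end pure bookkeeping; the only points needing care are the isomorphism-invariance and isolated-vertex-insensitivity of $t(H,\Delta)$ (which legitimizes the regrouping) and the tracking of the exponents of $p$ and $1-p$ together with the sign $(-1)^{|S_F|}$ so that the final expression matches the stated normalization. I do not expect a genuine obstacle here.
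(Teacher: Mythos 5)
Your proof is correct and is essentially the same term-by-term expansion the paper uses; the only cosmetic difference is that the paper factors out $\rand(F,p)$ before expanding, whereas you expand $(p+\Delta)$ and $((1-p)-\Delta)$ directly and arrive at the identical coefficient after regrouping. Your bookkeeping of the exponents, the sign $(-1)^{|S_F|}$, the identification of pairs $(S_E,S_F)$ with the sets counted by $n_j(H,F)$, and the evaluation of the $K_m$ case all match the paper's formula.
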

\begin{proof}
This follows from writing  
\begin{align*} \rho_F(W_p+\Delta) &= \int_{[0,1]^m} \prod_{e \in E} (p + \Delta(x_{e_1},x_{e_2})) \prod_{f \in \Ebar}(1 - p - \Delta(x_{f_1},x_{f_2})) \,d \mathbf{x} \\
&= \rand(F,p)  \int_{[0,1]^m} \prod_{e \in E} \left(1 + \frac{\Delta(x_{e_1},x_{e_2})}{p} \right) \prod_{f \in \Ebar}\left(1 - \frac{\Delta(x_{f_1},x_{f_2})}{1-p}\right) \,d \mathbf{x}
\end{align*}
expanding the products and collecting the terms.
\end{proof}

\begin{corollary}\label{cor:local-zero}
Let $\Delta$ be a balanced kernel, i.e., for almost all $x \in [0,1]$, we have \begin{equation}\label{eq:local-zero}
\int_{0}^1 \Delta(x,y) \,dy = 0\,. 
\end{equation}
Then $$ \rho_F(W_p + \eps \Delta) = \rand(F,p) + \eps^3 P_{K_3,F}(p) t(K_3,\Delta) + O(\eps^4)\,.$$
\end{corollary}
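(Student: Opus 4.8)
The plan is to feed $\Delta \mapsto \eps\Delta$ into the exact finite expansion of \cref{lem:expansion} and regroup by powers of $\eps$ using the homogeneity $t(H,\eps\Delta) = \eps^{e(H)}\,t(H,\Delta)$, which is immediate from the definition of $t$. Since $H$ ranges only over the (finitely many) subgraphs of the fixed complete graph $K_m$, each $H$ with $e(H) \ge 4$ contributes a term bounded in absolute value by $|P_{H,F}(p)|\cdot\|\Delta\|_\infty^{e(H)}\cdot \eps^{e(H)}$, so the total of all such terms is $O(\eps^4)$, with implied constant depending on $F$, $p$ and $\Delta$. Thus the whole task reduces to showing that, among the subgraphs $H$ with no isolated vertices and $e(H)\le 3$, every one except $H = K_3$ has $t(H,\Delta)=0$ whenever $\Delta$ is balanced.

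The mechanism for this is a single ``leaf-killing'' observation: if $H$ has a vertex $v$ of degree $1$ with unique neighbor $u$, then $t(H,\Delta)=0$ for every balanced kernel $\Delta$. Indeed $x_v$ occurs in exactly one factor of the integrand defining $t(H,\Delta)$, namely $\Delta(x_u,x_v)$ (the symmetry $\Delta(x_u,x_v)=\Delta(x_v,x_u)$ makes the chosen orientation irrelevant), so by Fubini we may integrate $x_v$ out first; since $\int_0^1 \Delta(x_u,x_v)\,dx_v = 0$ for a.e.\ $x_u$ by \cref{eq:local-zero} together with the symmetry of $\Delta$, the whole integral vanishes. Note this also disposes of any $H$ having a connected component equal to a single edge, since both endpoints of such an edge are leaves of $H$.

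It remains to check that $K_3$ is the only subgraph $H$ of $K_m$ with no isolated vertices, $e(H)\le 3$, and minimum degree at least $2$: from $\sum_v \deg_H(v) = 2e(H)\le 6$ we get $v(H)\le 3$, and on at most three vertices the only simple graph with exactly three edges is $K_3$, while $e(H)\in\{1,2\}$ forces a vertex of degree $\le 1$. All other candidates with $e(H)\le 3$ — explicitly $K_2$ ($e=1$); $P_3$ and $2K_2$ ($e=2$); and $P_4$, $K_{1,3}$, $P_3\cup K_2$ and $3K_2$ ($e=3$) — possess a leaf and hence contribute nothing. Since $m = v(F)\ge 3$, the triangle does appear among the subgraphs in \cref{lem:expansion}, so collecting the surviving term at order $\eps^3$ yields exactly $\eps^3 P_{K_3,F}(p)\,t(K_3,\Delta)$, as claimed. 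I do not foresee a genuine obstacle here; the only points needing a little care are the appeal to symmetry of $\Delta$ when integrating out a leaf variable, and the bookkeeping that the $O(\eps^4)$ error is permitted to depend on $F$, $p$ and $\Delta$.
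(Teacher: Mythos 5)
Your proof is correct and takes essentially the same approach as the paper: the paper's (terse) argument is exactly that balancedness kills $t(H,\Delta)$ for any $H$ with a degree-$1$ vertex, and that $K_3$ is the unique non-empty, isolated-vertex-free graph with at most $3$ edges and no such vertex. You simply fill in the homogeneity bookkeeping, the Fubini/leaf-integration step, and the enumeration in more detail.
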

\begin{proof}
    Note that by \cref{eq:local-zero}, for any $H$ that has a vertex of degree $1$ we have $t(H,\Delta) = 0$.  The conclusion now follows by observing that the only graph without isolated vertices with at most $3$ edges and no vertices of degree $1$ is $K_3$.
\end{proof}

\begin{proof}[Proof of \cref{prop:three-points}]
    Consider the kernel $\Delta$ given by breaking $[0,1]^2$ into $9$ equal-sized squares of dimensions $1/3 \times 1/3$ and placing the following constant values on these squares:
    $$\begin{bmatrix} 
        2 & - 1& - 1 \\ -1 & 1  & 0 \\ -1 & 0 & 1
    \end{bmatrix}\,. $$
    Note that since the row sums of this matrix are $0$, we have that $\Delta$ satisfies the hypotheses of \cref{cor:local-zero}.  Further, a direct computation shows that $t(K_3,\Delta) = \frac{28}{27}$.   Thus \cref{cor:local-zero} implies  $$\rho_F(W_p + \eps\Delta) = \rand(F,p) + \eps^3 P_{K_3,F}(p) \cdot \frac{28}{27} + O(\eps^4)\,.$$

    If $P_{K_3,F}(p) \neq 0$, then we may take $\eps$ small enough and of the same sign as $P_{K_3,F}(p)$ to simultaneously ensure that both $W_p + \eps \Delta \in [0,1]$ and $\rho_F(W_p + \eps\Delta) > \rand(F,p)$.  Since $P_{K_3,F}(p)$ is a polynomial of degree $3$ in $p$ which is not identically $0$, there are at most three values $p_1,p_2,p_3 \in [0,1]$ for which we may not do the above.   
\end{proof}

\begin{example}\label{example}
Let $F$ be the path of length three together with an isolated vertex. Based on a flag algebra computation, Even-Zohar and Linial asked \cite{even2015note} whether $G(n,3/10)$ maximizes the inducibility of this graph at density $3/10$. Using \cref{prop:three-points}, we show that this is not the case. Indeed, one can compute that $$n_0(K_3,F) = 0,\quad n_1(K_3,F) = 2, \quad n_2(K_3,F) = 5, \quad n_3(K_3,F) = 3$$ and so 
    $$P_{K_3,F}(p) = \frac{\rand(F,p)}{(p(1-p))^3}\left(- 2(1-p)^2 p + 5 (1-p)p^2 - 3p^3 \right).$$  The roots of the polynomial in parentheses are $0, 2/5$ and $1/2$, implying that $2/5$ and $1/2$ are the only exceptional values for \cref{prop:three-points} in this case.  In particular, $I(F,3/10) > \rand(F,3/10)$.
\end{example}

\subsection{Limitations of the linear perturbative approach}
\label{sec:limit-perturbation}
To understand the failure of the linear perturbative argument for $F = C_5$ at $p = 1/2$, we begin by noting that $C_5$ is self-complementary; for any self-complementary graph $F$ and each $H$ we have $n_{j}(H,F) = n_{e(H)-j}(H,F)$, which implies that $P_{H,F}(1/2) = 0$ for any $H$ with an odd number of edges and any self-complementary graph $F$. 

In particular, for $F = C_5$ and for any kernel $\Delta$ with $\iint \Delta(x,y) \,dx\,dy = 0$, we note that the expansion in \cref{lem:expansion} yields $$\rho_{C_5}(W_{1/2} + \Delta) = \rand(C_5,1/2) + \eps^2 P_{P_2,C_5}(1/2) \cdot t({P_2},\Delta)  + O(\eps^3)$$
since the term corresponding to two disjoint edges integrates to $0$ due to $\iint \Delta(x,y) \,dx\,dy = 0$.

To compute $P_{P_2,C_5}(1/2)$ we note that $$n_0(P_2,C_5) = n_2(P_2,C_5) = 5,\qquad n_1(P_2,C_5) = 20$$ implying $$P_{P_2,C_5}(1/2) = \frac{1}{64}\left(5 \cdot \frac{1}{4} + 5 \cdot \frac{1}{4} - 20 \cdot \frac{1}{4} \right) = - \frac{5}{128}.$$
This implies that
$$\rho_{C_5}(W_{1/2} + \Delta) = \rand(C_5,1/2) - \eps^2 \cdot \frac{5}{32}\cdot t({P_2},\Delta) + O(\eps^3)\,.$$

Note that \begin{equation}\label{eq:P_2-count}
t(P_2,\Delta) = \int_0^1\int_0^1\int_{0}^1 \Delta(x,y)\Delta(x,z)\,dz\,dy\,dx = \int_0^1 \left( \int_0^1 \Delta(x,y)\,dy\right)^2\,dx\,.
\end{equation}

In particular, we have $t(P_2,\Delta) \geq 0$.  Thus, if $t(P_2,\Delta) \neq 0$ we have $$\rho_{C_5}(W_{1/2} + \Delta) = \rand(C_5,1/2) - O(\eps^2)$$ completing the proof of \cref{fact:C5} in the case of $t(P_2,\Delta) \neq 0$.  If we have $t(P_2,\Delta) = 0$ then by \cref{eq:P_2-count} we have $$\int_0^1 \Delta(x,y)\,dy = 0$$ for almost all $x$.  In particular, for any graph $H$ with a vertex of degree $1$ we have that $t(H,\Delta) = 0$.  Recall that since $C_5$ is self-complementary, in the expansion of $\rho_{C_5}(W_{1/2} + \eps \Delta)$, all graphs $H$ with an odd number of edges vanish.  The only graphs with at most four edges that have no vertices of degree $1$ are $C_3$ and $C_4$; recalling that $P_{C_3,C_5}(1/2) = 0$ due to $C_5$ being self-complementary, we see that in the case when $t(P_2,\Delta) = 0$ the expansion from \cref{lem:expansion} becomes \begin{equation}\label{eq:fourth-expansion}
\rho_{C_5}(W_{1/2} + \eps \Delta) = \rand(C_5,1/2) + \eps^4 P_{C_4,C_5}(1/2) t(C_4,\Delta) + O(\eps^5)\,. \end{equation}

We directly compute $$n_0(C_4,C_5) = n_4(C_4,C_5) = 0\,,\quad n_1(C_4,C_5) = n_3(C_4,C_5) = 5\,,\quad n_2(C_4,C_5) = 0$$
and so $$P_{C_4,C_5}(1/2) =  -\frac{5}{32}\,.$$

Importantly, the functional $\Delta \mapsto t(C_4,\Delta)^{1/4}$ is a norm on the space of kernels (see e.g. \cite[Prop 14.2]{lovaszBook}) and so $t(C_4,\Delta) = 0 \iff \Delta \equiv 0.$  Since we have assumed $\Delta \not\equiv 0$, we have that $t(C_4,\Delta) > 0$.  Plugging this into \cref{eq:fourth-expansion} completes the proof of \cref{fact:C5}.

\section{Proof of \cref{thm:main-local}}
\label{sec:main-proof}

Finally, we prove \cref{thm:main-local}. The construction of our kernels is quite a bit more involved compared to the simple, explicit proof of \cref{prop:three-points}. In particular, we will make extensive use of the tensor product operation, which we recall for the reader's convenience.  The use of the tensor product introduces the non-linearity required to upgrade \cref{prop:three-points} to \cref{thm:main-local} given the obstacle observed in  \cref{fact:C5}.

\begin{definition}
    \label{def:tensor-product}
  Fix any measure preserving map $\varphi : [0,1] \to [0,1]^2$. We write $\varphi(x) = (\varphi(x)_1, \varphi(x)_2)$. Given kernels $U, W : [0,1] \times [0,1] \to \R$, we define the tensor product $U\otimes W$ by
  \[(U \otimes W)(x,y) = U(\varphi(x)_1, \varphi(y)_1)W(\varphi(x)_2, \varphi(y)_2).\]
\end{definition}
\begin{remark}
    The choice of the measure preserving map is not important for our purpose, so we fix one arbitrarily; see the discussion in \cite[Section~7.4]{lovaszBook}.
\end{remark}

The key property of tensor products that we will need is the following.

\begin{fact}[see, e.g.,~{\cite[Equation~(7.17)]{lovaszBook}}]
    \label{fact:tensor-product-hom}
    For every finite labeled graph $F$ and any kernels $U$ and $W$,
    \[t(F,U\otimes W) = t(F,U)t(F,W). \]
\end{fact}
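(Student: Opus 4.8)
The plan is to prove the identity by unfolding both sides directly from the definitions and reducing to Fubini's theorem after a measure-preserving change of variables. Throughout I will use the same fixed orientation of $E(F)$ that is used in the definition of $t(\cdot,\cdot)$; the result does not depend on this choice (since kernels are symmetric), but fixing it is convenient and harmless.

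First I would write out the left-hand side. By \cref{def:tensor-product},
\[
t(F, U\otimes W) = \int_{[0,1]^{V(F)}} \prod_{e \in E(F)} U\bigl(\varphi(x_{e_1})_1, \varphi(x_{e_2})_1\bigr)\, W\bigl(\varphi(x_{e_1})_2, \varphi(x_{e_2})_2\bigr)\, d\mathbf{x},
\]
and since the $U$-factors and $W$-factors commute, the integrand splits as the product of $\prod_{e} U(\varphi(x_{e_1})_1, \varphi(x_{e_2})_1)$ and $\prod_{e} W(\varphi(x_{e_1})_2, \varphi(x_{e_2})_2)$. Next I would perform the change of variables $x_i \mapsto \varphi(x_i) =: (u_i, w_i) \in [0,1]^2$ for all vertices $i \in V(F)$ at once. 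The coordinatewise map $(x_i)_i \mapsto (\varphi(x_i))_i$ from $[0,1]^{V(F)}$ to $([0,1]^2)^{V(F)}$ is measure preserving because $\varphi$ is and a finite product of measure-preserving maps is measure preserving between the corresponding product spaces. This yields
\[
t(F, U\otimes W) = \int_{([0,1]^2)^{V(F)}} \Bigl(\prod_{e \in E(F)} U(u_{e_1}, u_{e_2})\Bigr)\Bigl(\prod_{e \in E(F)} W(w_{e_1}, w_{e_2})\Bigr)\, \prod_{i \in V(F)} du_i\, dw_i.
\]

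Finally, I would observe that the first bracket depends only on the coordinates $(u_i)_{i}$ and the second only on $(w_i)_{i}$, and that $U$ and $W$ are bounded, so the integrand is bounded and Fubini's theorem applies. Integrating out the $w$-variables and the $u$-variables separately factors the integral as $\bigl(\int \prod_e U(u_{e_1},u_{e_2})\, d\mathbf{u}\bigr)\bigl(\int \prod_e W(w_{e_1},w_{e_2})\, d\mathbf{w}\bigr) = t(F,U)\, t(F,W)$, as claimed. The argument is essentially bookkeeping; the only points needing a word of care are the change of variables (that coordinatewise application of $\varphi$ pushes Lebesgue measure on $[0,1]^{V(F)}$ forward to Lebesgue measure on $([0,1]^2)^{V(F)}$) and the applicability of Fubini, both of which follow immediately from the measure-preserving hypothesis on $\varphi$ and the boundedness of kernels. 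I do not anticipate any genuine obstacle.
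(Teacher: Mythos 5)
The paper does not prove this fact; it simply cites \cite[Equation~(7.17)]{lovaszBook}. Your argument is correct and is exactly the standard proof one would find there: unfold the definition of $U\otimes W$, push the integral through the coordinatewise measure-preserving map $(x_i)_i \mapsto (\varphi(x_i))_i$, observe that the integrand factors into a function of the $u$-coordinates times a function of the $w$-coordinates, and apply Fubini (justified by boundedness). Nothing is missing.
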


We will also need the existence of a balanced kernel $\Delta$ such that $t(K_z,\Delta) \neq 0$ for all $z \geq 3$. 

\begin{lemma}
\label{lem:balanced-kernel}    
There exists a kernel $B: [0,1]^2 \to [-1,1]$ such that $\int_0^1B(x,y)dy = 0$ for almost all $x \in [0,1]$ and $t(B,K_z)\neq 0$ for all $z \geq 3$. 
\end{lemma}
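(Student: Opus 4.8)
The goal is to produce a single balanced kernel $B:[0,1]^2\to[-1,1]$ with $\int_0^1 B(x,y)\,dy=0$ a.e.\ and $t(K_z,B)\neq 0$ for every $z\ge 3$ simultaneously. The plan is to build $B$ from a finite "block" construction and then argue that the relevant clique densities are given by an explicit algebraic expression that can only vanish for finitely many $z$, after which a small perturbation or a clever choice of parameters removes those bad values.

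\medskip

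\textbf{Step 1: A parametrized family of balanced block kernels.} Fix an integer $k\ge 2$ and partition $[0,1]$ into $k$ equal intervals $I_1,\dots,I_k$. Let $M=(m_{ij})$ be a symmetric $k\times k$ real matrix with all row sums equal to $0$, and let $B_M$ be the step kernel equal to $m_{ij}$ on $I_i\times I_j$. Then $B_M$ is automatically balanced, and
\[
t(K_z,B_M)=\frac{1}{k^{z}}\sum_{\substack{(i_1,\dots,i_z)\in[k]^z}}\ \prod_{1\le a<b\le z} m_{i_a i_b},
\]
which up to the normalization $k^{-z}$ is the "clique partition function" $Z(K_z,M):=\sum_{i_1,\dots,i_z} \prod_{a<b} m_{i_a i_b}$ of the weighted graph $M$. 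So the lemma reduces to: find a symmetric $k\times k$ matrix $M$ with zero row sums, entries bounded by $1$ in absolute value, such that $Z(K_z,M)\neq 0$ for all $z\ge 3$. (If $\|M\|_\infty>1$ we simply rescale $M$ by a constant, which scales $t(K_z,B_M)$ by $c^{\binom z2}$ and does not affect nonvanishing.)

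\medskip

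\textbf{Step 2: Diagonalize and get a clean formula.} Since $M$ is real symmetric with zero row sums, the all-ones vector $\mathbf 1$ is an eigenvector with eigenvalue $0$. Write the eigenvalues of $M$ as $\lambda_1,\dots,\lambda_{k}$ with $\lambda_k=0$ corresponding to $\mathbf 1$. A standard computation (expanding $\prod_{a<b} m_{i_a i_b}$ is not linear, so one instead uses the identity $Z(K_z,M)=\sum_{i} \big(\text{contributions}\big)$ via the spectral decomposition of the Gram-type structure) expresses $k^{-z}Z(K_z,M)$ as a fixed universal polynomial combination of power sums of the $\lambda_i$. Concretely, it is cleanest to instead choose $M$ of a special rank-one-plus-scalar form: take $M = a\,J + D$ where $J$ is all-ones and $D$ is diagonal — but zero row sums forces $D$ to be constrained; more robustly, pick $M$ so that $M$ has exactly two nonzero eigenvalues $\lambda$ and $\mu$ besides the forced $0$. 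For such low-rank $M$ one gets $t(K_z,B_M)=\alpha \lambda^{z-1}+\beta\mu^{z-1}+(\text{lower order in }z)$ type behavior, or one can make $t(K_z,B_M)$ literally equal to a Vandermonde-type expression $\sum_j c_j r_j^{z}$ with distinct nonzero ratios $r_j$ and nonzero coefficients $c_j$; a nonzero exponential polynomial of this shape has only finitely many integer zeros.

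\medskip

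\textbf{Step 3: Kill the finitely many bad $z$.} After Step 2 we know $t(K_z,B_M)\neq 0$ for all $z$ outside some finite exceptional set $S\subset\{3,4,\dots\}$ (depending on $M$). To remove $S$: either (a) observe the expression $t(K_z,B_M)$ depends analytically on the free parameters in $M$ (the off-diagonal entries, subject to the linear zero-row-sum constraints), so for each fixed $z\in S$ the vanishing locus is a proper algebraic subvariety of parameter space; since $S$ is finite, a generic choice of $M$ avoids all of them at once, and generic $M$ has small enough norm after rescaling — or (b) use the tensor-power/Kronecker trick: replace $B_M$ by $B_M\otimes B_{M'}$ for a second matrix $M'$ with zero row sums, so that $t(K_z, B_M\otimes B_{M'})=t(K_z,B_M)\,t(K_z,B_{M'})$, and choose $M'$ whose exceptional set $S'$ is disjoint from $S$ (e.g.\ obtained from $M$ by a generic perturbation). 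Either route finishes the lemma; note $B_M\otimes B_{M'}$ is still balanced because $\int (B_M\otimes B_{M'})(x,y)\,dy = 0$ whenever one of the factors is balanced, and its range is still contained in $[-1,1]$ after rescaling.

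\medskip

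\textbf{Main obstacle.} The one genuinely delicate point is Step 2/3: making sure the chosen $M$ does \emph{not} accidentally have infinitely many $z$ with $t(K_z,B_M)=0$. A naive symmetric matrix with zero row sums can have a lot of spectral symmetry (e.g.\ $\pm\lambda$ eigenvalue pairs) that forces $t(K_z,B_M)=0$ for all even or all odd $z$ — exactly the kind of coincidence the paper warns about. So the real content is to exhibit an explicit $M$ (or a provably generic one) whose clique partition function is a nonvanishing exponential polynomial in $z$; once that algebraic fact is nailed down, "finitely many zeros of an exponential polynomial" plus the perturbation/tensor argument closes everything routinely.
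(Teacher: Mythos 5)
Your Step 2 contains a genuine gap, and it is the load-bearing step. You assert that $k^{-z}Z(K_z,M)$ can be written as ``a fixed universal polynomial combination of power sums of the $\lambda_i$'' and, for low-rank $M$, that $t(K_z,B_M)$ has the shape $\sum_j c_j r_j^{z}$. Neither claim is correct for cliques. Unlike cycle densities, which are traces of matrix powers, the clique homomorphism density $\sum_{i_1,\dots,i_z}\prod_{a<b}m_{i_a i_b}$ has no clean spectral expansion: the product couples every pair of vertices, and even when $M$ has rank $2$, say $m_{ij}=f_if_j+g_ig_j$, the integrand $\prod_{a<b}(f_{i_a}f_{i_b}+g_{i_a}g_{i_b})$ expands into $2^{\binom z2}$ cross terms that do not factor over vertices. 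As a function of $z$, $t(K_z,B_M)$ is a multinomial sum with exponents of the form $n_in_j$ and $\binom{n_i}{2}$, not an exponential polynomial, so ``finitely many zeros of an exponential polynomial'' does not apply. Step 3's genericity argument then has nothing to bootstrap from: you would still need, for each $z$, a witness $M$ with $t(K_z,B_M)\neq 0$ and a reason that only finitely many $z$ fail for a fixed $M$, which is exactly what Step 2 was supposed to provide. The tensor trick in Step 3(b) has the same problem: it requires the zero sets of $z\mapsto t(K_z,B_M)$ and $z\mapsto t(K_z,B_{M'})$ to be disjoint, and you have no control over either set.

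You actually brush past the one case where everything collapses beautifully and then abandon it: rank one. The paper takes $B(x,y)=f(x)f(y)$ for a single function $f:[0,1]\to[-1,1]$ with $\int f=0$ but $\int f^k\neq 0$ for all $k\ge 2$ (e.g.\ $f=1$ on $[0,1/3]$ and $f=-1/2$ on $(1/3,1]$). Balancedness is immediate, and because every vertex of $K_z$ has degree $z-1$, the integrand factors across vertices:
\[
t(K_z,B)=\int_{[0,1]^z}\prod_{1\le a<b\le z}f(x_a)f(x_b)\,d\mathbf{x}=\prod_{a=1}^{z}\int_0^1 f(x_a)^{z-1}\,dx_a=\left(\int_0^1 f^{z-1}\right)^{z}\neq 0
\]
for all $z\ge 3$. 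No perturbation, no genericity, no tensoring is needed. If you want to salvage your block-matrix framing, the point is simply to take $M=ff^{T}$ with $\mathbf 1^{T}f=0$ and $\sum_i f_i^{k}\neq 0$ for all $k\ge 2$; that is the only low-rank case for which the clique density has the product structure you were reaching for.
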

\begin{proof}
    Let $f:[0,1] \to [-1,1]$ be any function for which $\int_{0}^1 f(x)dx = 0 $ but $\int_0^1 f(x)^k dx \neq 0$ for all $k \geq 2$; for instance, we can take $f(x) = 1$ for $x \in [0,1/3]$ and $f(x) = -1/2$ for $x \in (1/3, 1]$. Let $B(x,y) = f(x)f(y)$. Then, for all $x \in [0,1]$, $\int_0^1 B(x,y)dy = f(x)\int_0^1 f(y)dy = 0$. Moreover, for all $z \geq 3$,
    \[t(B,K_z) = \left(\int_0^1 f(x)^{z-1}\right)^{z} \neq 0. \qedhere\]
\end{proof}

The main ingredient in our proof is the following proposition, whose proof we defer to \cref{sec:proof-key-prop}.

\begin{proposition}
    \label{prop:key} 
    For every $z \geq 3$, there exists a kernel $U_z : [0,1]\times [0,1] \to [-1,1]$ such that $t(K_z,U_z) + |t(G, U_z)| < 0$, simultaneously for all non-clique graphs $G$ with minimum degree at least $2$. 
\end{proposition}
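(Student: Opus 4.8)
\textbf{Proof proposal for \cref{prop:key}.}

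The plan is to build $U_z$ as a \emph{symmetric tensor power} of a small, explicitly chosen matrix kernel, exploiting the fact that $t(\cdot,\cdot)$ turns tensor products into products (\cref{fact:tensor-product-hom}). Recall the classical idea of Jagger, {\v{S}}{\v{t}}ov{\'\i}{\v{c}}ek and Thomason: one wants a kernel $A$ (realized as a block-constant symmetric matrix on $[0,1]^2$) for which $t(K_z,A)$ is negative while $t(G,A)$ is comparatively tiny for every graph $G$ of minimum degree at least $2$ that is not a clique. The mechanism that makes this possible is that if $G$ has minimum degree $\ge 2$ but is not $K_z$ on $z$ vertices, then $G$ either has fewer than $z$ vertices, or has $z$ vertices but strictly fewer than $\binom z2$ edges (or more vertices with correspondingly few edges per vertex); in all cases $G$ ``spreads its edges more thinly'' than $K_z$, and a suitably designed matrix punishes exactly this. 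Concretely, I would first pin down a finite symmetric matrix $M$ (entries of both signs, say with some large entry or some cancellation structure coming from the Jagger--{\v{S}}{\v{t}}ov{\'\i}{\v{c}}ek--Thomason construction) together with the \emph{uniform} probability vector on its index set, such that: (i) $\sum_{i,j} M_{ij} \ne 0$ is irrelevant but the homomorphism density $t(K_z,M)$ (computed with respect to the uniform measure) is strictly negative; and (ii) for each non-clique $G$ with $\delta(G)\ge 2$, $|t(G,M)|$ is \emph{strictly} smaller than $|t(K_z,M)|$, not just $\le$. Step (ii) for a fixed $z$ is only a finite set of inequalities, since graphs $G$ with $\delta(G)\ge2$ and $t(G,M)\ne0$ relevant to us can be taken on boundedly many vertices (any $G$ with more than $v(M)^{?}$ vertices, or with a vertex of degree too large, contributes $0$ or is dominated — this needs to be argued, see below).

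Once such an $M$ is in hand, set $U_z := M^{\otimes k}$ for a large integer $k$ (the tensor power realized via \cref{def:tensor-product}). By \cref{fact:tensor-product-hom}, $t(K_z,U_z) = t(K_z,M)^k$ and $t(G,U_z) = t(G,M)^k$ for every $G$. Since $t(K_z,M) < 0$, the sign of $t(K_z,M)^k$ is $(-1)^k$, so I would take $k$ odd to keep $t(K_z,U_z)$ negative. Then
\[
t(K_z,U_z) + |t(G,U_z)| = t(K_z,M)^k + |t(G,M)|^k = -\,|t(K_z,M)|^k + |t(G,M)|^k,
\]
which is negative precisely because $|t(G,M)| < |t(K_z,M)|$ after raising to the $k$-th power the gap is preserved (indeed any $k\ge1$ works once the strict inequality $|t(G,M)|<|t(K_z,M)|$ holds for all relevant $G$); but to be safe and to absorb the countably-many $G$'s one takes $k$ large enough that $\sum_{G}|t(G,M)|^k < |t(K_z,M)|^k$, or better, argues that only finitely many $G$ matter. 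Finally, rescale: $M$ has entries in some bounded range, so $U_z = M^{\otimes k}$ has entries in $[-C^k, C^k]$; replacing $M$ by $M/\|M\|_\infty$ at the outset (which only scales $t(K_z,M)$ and every $t(G,M)$ by a common power of $\|M\|_\infty$, preserving all ratios and signs) makes $\|U_z\|_\infty \le 1$, so $U_z$ is a legitimate kernel into $[-1,1]$.

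The main obstacle is Step (ii): producing the matrix $M$ for which $|t(K_z,M)|$ strictly dominates $|t(G,M)|$ over \emph{all} non-clique $G$ with $\delta(G)\ge 2$ simultaneously, and controlling the infinitely many such $G$. The first half is where the borrowed algebraic construction of Jagger--{\v{S}}{\v{t}}ov{\'\i}{\v{c}}ek--Thomason does the work — their eigenvalue/character-theoretic choice of $M$ is exactly engineered so that $t(H,M)$ depends on $H$ only through coarse invariants that separate $K_z$ from everything sparser. The second half — reducing ``all non-clique $G$ with $\delta(G)\ge2$'' to a finite check — should follow from a counting estimate: if $G$ has $n$ vertices and $\delta(G)\ge 2$ then $e(G)\ge n$, and for a kernel bounded by $1$ one gets $|t(G,M)| \le$ (something decaying in $n$ once $M$ is chosen with small enough ``spectral radius'' in the appropriate sense), so that beyond some $n_0(z)$ the term $|t(G,M)|$ is automatically below $|t(K_z,M)|$; among $G$ on at most $n_0(z)$ vertices there are finitely many, handled by a direct (finite) verification built into the choice of $M$. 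Packaging this cleanly — i.e. choosing the Jagger--{\v{S}}{\v{t}}ov{\'\i}{\v{c}}ek--Thomason matrix so that \emph{both} the finite list of small non-cliques \emph{and} the tail estimate work out — is the crux, and is presumably what \cref{sec:proof-key-prop} is devoted to.
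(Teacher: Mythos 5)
There is a genuine gap, and it sits exactly where you flag it as the ``crux'': you have reduced \cref{prop:key} to the existence of a finite matrix $M$ with $t(K_z,M)<0$ and $|t(G,M)|<|t(K_z,M)|$ for every non-clique $G$ of minimum degree $\ge 2$, and then you wave at the Jagger--\v{S}tov\'{\i}\v{c}ek--Thomason construction to supply $M$. But their kernel, $U(x,y)=(-1)^{q(x+y)}$ with $q$ a quadratic form over $\mathbb{F}_2^k$, gives $t(K_z,U)=-2^{-k}$ \emph{only for even $z$}; working in characteristic $2$ is baked into their proof (you need $z-2\equiv 0$ so the diagonal of the signed Laplacian-type matrix collapses). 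For odd $z$ the statement is precisely what the paper has to prove from scratch, by moving to $\mathbb{F}_p^k$ for an odd prime $p\mid z-2$, using a quadratic form twisted by a nonresidue, and evaluating the resulting exponential sums as quadratic Gauss sums with a case split on $p\bmod 4$. Your proposal does not contain any version of that construction, so the key object $M$ it needs is not produced.

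The ``reduce to finitely many $G$ via a tail estimate'' step is also not correct as sketched. You propose to use $|t(G,M)|\le\|M\|_\infty^{e(G)}$ together with $\|M\|_\infty<1$ so that large graphs are negligible, but the candidate seed matrix (JST or any characteristic-$p$ analogue) has entries of modulus exactly $1$, so this bound is vacuous; and rescaling $M$ to make $\|M\|_\infty<1$ shrinks $|t(K_z,M)|$ by exactly $\|M\|_\infty^{e(K_z)}$, which is the \emph{largest} edge count among the graphs in play, so it makes the comparison against sparser $G$ \emph{worse}, not better. The way the paper handles ``infinitely many $G$ at once'' is structural, not a soft tail bound: \cref{lem:rank} shows that for any non-clique $G$ with $\delta(G)\ge 2$, the signed edge-sum matrix $M_G^\sigma$ has rank at least $2$ over $\mathbb{F}_p$ for \emph{every} sign pattern $\sigma$, whence $|t(G,U_k)|\le p^{-k}$ uniformly in $G$ (\cref{cor:hom-rank-expansion}), while for $K_z$ the rank-$1$ sign patterns contribute a term of order $p^{-k/2}$ with the correct (negative) sign. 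That uniform $p^{-k}$ versus $p^{-k/2}$ separation is what your plan would need to replicate, and it does not follow from the reduction you describe. Finally, note that the paper's $U_k$ on $\mathbb{F}_p^k$ is \emph{not} literally a tensor power of the $k=1$ kernel (cosine of a sum does not factor), so amplification there comes from the exponent $k$ in the Gauss-sum evaluation rather than from \cref{fact:tensor-product-hom}; the tensor-power trick is used in the paper, but in \cref{sec:main-proof}, not inside the proof of \cref{prop:key}.
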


Before proving \cref{prop:key}, let us show how it can be used to prove \cref{thm:main-local}. 

\begin{proof}[Proof of \cref{thm:main-local}]
    Let $F$ be a finite labelled graph with $m\geq 3$ vertices. Our perturbation $\Delta$ will be of the form
    \[\Delta = \delta\cdot B \otimes (\lambda \cdot U_m)^{\otimes N} \otimes W,\]
    where $\delta \leq \min(p,1-p)$ is the parameter in the statement of \cref{thm:main-local}, $B: [0,1]^2 \to [-1,1]$ is the kernel from \cref{lem:balanced-kernel}, $U_m$ is the kernel from \cref{prop:key}, $N \in \mathbb{Z}_+$ and $\lambda \in (0,1]$ will be chosen later, and $W$, which will also be chosen later, is either $U_z$ for some $3 \leq z \leq m$ from \cref{prop:key} or the constant $1$ graphon. Note that for any such choice of $N$ and $W$, $B\otimes U_m^{\otimes N} \otimes W : [0,1]^2 \to [-1,1]$, so that $\Delta : [0,1]^2 \to [-\delta, \delta]$.

    Moreover, since $B$ is balanced, $t(G,B) = 0$ for any graph with a vertex of degree $1$. Hence, $t(G,\Delta) = 0$ for any such graph $G$. In particular, by \cref{fact:tensor-product-hom}, $\rho(\Delta) = 0$, as required.

    We now show that for suitable $N$ and $W$, $\rho_F(W_p + \Delta) > \rho_F(W_p)$. Substituting the form of $\Delta$ in \cref{lem:expansion} and using \cref{fact:tensor-product-hom}, we have that
    \begin{align*}
        \rho_F(W_p + \Delta) &= \rand(F,p) + \sum_{H \in \mathcal{H}} P_{H,F}(p)\cdot t(H,\Delta)\\
        &= \rand(F,p) + \sum_{H \in \mathcal{H}} P_{H,F}(p)\cdot \delta^{|E(H)|}t(H,B)t(H,\lambda U_m)^N t(H,W)\\
        &= \rand(F,p) + \sum_{H \in \mathcal{H}_1} P_{H,F}(p)\cdot \delta^{|E(H)|}t(H,B)t(H,\lambda U_m)^N t(H,W), 
    \end{align*}
where $\mathcal{H}$ denotes all non-empty unlabeled subgraphs of $K_m$ without isolated vertices and $\mathcal{H}_1$ denotes those graphs in $\mathcal{H}$ which have minimum degree at least $2$; here, we used that for all $H \in \mathcal{H}\setminus \mathcal{H}_1$, $t(H,B) = 0$.

Let $\mathcal{K} = \mathcal{K}_{F,p} \subseteq \mathcal{H}$ denote the set of cliques $K_z$ for which $P_{K_z, F}(p)t(H,B) \neq 0$. Note that $K_m \in \mathcal{K}$ since $|P_{K_m,F}(p)| = 1$ (\cref{lem:expansion}) and $t(K_m,B) \neq 0$ by construction. Let $\mathcal{K}^* \subseteq \mathcal{K}$ denote the set of cliques $K_z \in \mathcal{K}$ for which
\[|t(K_z, U_m)| = |\max_{H \in \mathcal{H}_1}t(H,U_m)|;\]
note that by \cref{prop:key}, all the maximizers of the right-hand side must be cliques. By taking $\lambda = 1$ if $\mathcal{K}^*$ is a singleton, and $\lambda \in (0,1)$ sufficiently close to $1$ otherwise, we may ensure that there is a unique integer $z \in [3,m]$ and $\gamma \in (0,1)$ such that 
\[ |t(H,\lambda U_m)| \leq \gamma |t(K_{z}, \lambda U_m)| \]
for all $H \in \mathcal{H}_1 \setminus K_{z}$. In particular, $|t(K_z, \lambda U_m)| \geq |t(K_m, \lambda U_m)| > 0$. 

With this choice of $z$, we have
\begin{align*}
    \rho_F(W_p + \Delta) - \rand(F,p) &= c_{K_{z},F}(p) t(K_{z}, \lambda U_m)^N t(K_{z},W) + \sum_{H\in \mathcal{H}_1 \setminus K_{z}}c_{H,F}(p) t(H,\lambda U_m)^N t(H,W) \\
    &\geq c_{K_{z},F}(p) t(K_{z}, \lambda U_m)^N t(K_{z},W) - \sum_{H\in \mathcal{H}_1 \setminus K_{z}}|c_{H,F}(p) t(H,\lambda U_m)^N| \\
    &\geq c_{K_{z},F}(p) t(K_{z}, \lambda U_m)^N t(K_{z},W) - \gamma^N \sum_{H\in \mathcal{H}_1 \setminus K_{z}}|c_{H,F}(p) t(K_{z},\lambda U_m)^N|
\end{align*}
We wish to show that there is a choice of $N$ and $W$ for which the right hand side is positive. Note that, by construction, $c_{K_z,F}(p) \neq 0$. If it is positive, we take $W$ to be the constant one graphon; if it is negative, we take $W = U_z$ (from \cref{prop:key}): in either case, we have $c_{K_z, F}(p)t(K_z,W) > 0$. Now, taking $N$ to be a sufficiently large even positive integer and using that $|t(K_z, \lambda U_m)| > 0$ finishes the proof.
\end{proof}

\section{Proof of \cref{prop:key}}
\label{sec:proof-key-prop}
In this section, we prove \cref{prop:key}. First, we claim that it suffices to consider $z$ odd, $z \geq 5$. Indeed, for even $z$, the statement of the lemma was proved by Jagger, {\v{S}}{\v{t}}ov{\'\i}{\v{c}}ek, and Thomason \cite{jagger1996multiplicities} (see the next paragraph), and indeed, our construction draws inspiration from theirs, whereas for $z = 3$, we may simply consider the constant kernel $U_{3} \equiv -\alpha$ for any $\alpha \in (0,1)$, for which we have
\[t(K_3, U_3) + |t(G, U_3)| \leq -\alpha^3 + \alpha^4 < 0,\]
since any non-clique graph $G$ with minimum degree at least $2$ has at least $4$ edges.  

The work of Jagger, {\v{S}}{\v{t}}ov{\'\i}{\v{c}}ek, and Thomason \cite{jagger1996multiplicities} disproves a conjecture of Burr and Rosta \cite{BR}, and Erd\H{o}s \cite{Erdos} on Ramsey multiplicities by showing that any graph containing $K_4$ is not a common graph (roughly speaking, we say that $H$ is a common graph if among red/blue edge colorings of the complete graph, the monochromatic density of $H$ is asymptotically minimized by the uniformly random red/blue coloring). In the course of their proof, they prove the even $z$ case of \cref{prop:key}. To describe their kernel $U:[0,1]\times [0,1]\to [-1,1]$, we decompose $[0,1]$ into $2^k$ equally-sized intervals which we arbitrarily identify with $V:= \mathbb{F}_2^k$. We then define $U:V\times V \to [-1,1]$ by $U(x,y) := (-1)^{q(x+y)}$ for an explicit quadratic form $q$ on $V$. Using our notation, they show (see last equation in the proof of Lemma 10 and statement of Lemma 11 in \cite{jagger1996multiplicities}) that $t(H,U) = -2^{-k}$ if $H$ is a clique and $|t(H,U)| \leq 2^{-2k}$ if $H$ is non-empty and not a clique. Since their construction heavily relies on working in characteristic $2$, it is only able to prove \cref{prop:key} for even cliques. 

Our proof below uses several of the same high-level elements as \cite{jagger1996multiplicities}. However, since we are no longer working in characteristic $2$, there are considerably more number-theoretic intricacies. 

\begin{proof}[Proof of \cref{prop:key} for $z$ odd, $z\geq 5$]
Let $p$ be an odd prime such that $p$ divides $z-2$. Let $k$ be an integer parameter which will be chosen later. To define our kernel $U = U_z : [0,1] \times [0,1] \to [-1,1]$, we decompose $[0,1]$ into $p^{k}$ equally-sized intervals, which we (arbitrarily) identify with $V:=\mathbb{F}_p^k$. 

First, let $s$ be some fixed quadratic nonresidue modulo $p$ and define 
the quadratic form $q : V \to \mathbb{F}_p$ via 

\[q(x) := s \cdot x^T \mathrm{Id}_V  x = s(x_1^2 + \dots + x_k^2)\]
where we write $x = (x_1,\ldots,x_k)\,.$ 

We then define 
$U : V \times V \to [-1,1]$ by
\begin{align*}
    U(x,y) := \cos\left(2\pi \frac{q(x+y)}{p} \right) =  \frac{1}{2}\left(\exp(2\pi i q(x+y)/p) + \exp(-2\pi i q(x+y)/p) \right)\,.
\end{align*}

We begin by finding a more convenient expression for homomorphism densities into $U$. Let $G$ be a graph with $\ell$ vertices and edge set $E(G)$.  For an edge $e$ we let $e_1$ and $e_2$ be its two vertices ordered arbitrarily.
Let $E_{e}$ denote the $\ell \times \ell$ matrix (over $\mathbb{F}_p$) with the $e_1 e_1, e_1 e_2, e_2 e_1, e_2 e_2$  entries equal to one and all other entries equal to $0$. For $\sigma \in \{\pm 1\}^{E(G)}$, let $M_G^{\sigma} := \sum_{e \in E(G)} E_{e} \sigma_{e}$. 

Since $M = M_G^\sigma$ is a symmetric matrix over a field of characteristic different from $2$, it follows (see \cite[Chapter~1]{lam2005introduction}) that $C^T M C = D$ for some invertible matrix $C$ over $\mathbb{F}_p$ and diagonal matrix $D = D_G^\sigma =  \operatorname{diag}(d_1^{\sigma},\dots, d_\mathrm{rk}^\sigma,0,\dots,0)$, where $\mathrm{rk} = \mathrm{rk}(M)$ denotes the rank of $M$ over $\mathbb{F}_p$ (so $d_i^\sigma \neq 0$); note that $\mathrm{rk}(M) \geq 1$.  We note that the choice of $D$ need not be unique---it need not be the case that $C^T = C^{-1}$ and indeed $D$ need not be the eigenvalues of $M$---and  a choice in a particular case will be made later in \cref{lem:rank}. 

\begin{lemma}\label{lem:hom-U-expansion}
    In the notation above, we have $$2^{|E(G)|} t(G,U) = \sum_{\sigma \in \{\pm 1\}^{E(G)}} \prod_{j = 1}^{\mathrm{rk}(M_G^\sigma)} \left(\frac{g(d_j^\sigma s ; p)}{p} \right)^k $$
    where $g(\mu;p) = \sum_{x \in \mathbb{F}_p} \exp(2 \pi i \mu x^2 / p)$
    denotes the quadratic Gauss sum. 
\end{lemma}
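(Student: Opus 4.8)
The plan is a direct computation with finite Fourier/Gauss sums, following the high-level strategy of \cite{jagger1996multiplicities} while carefully tracking the extra powers of $p$ that appear in odd characteristic.

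First I would note that, since $U$ is constant on each block of the $p^k \times p^k$ grid indexing $V \times V$, the integral defining $t(G,U)$ collapses to a finite average over $\mathbf{x} = (x_1,\dots,x_\ell) \in V^\ell$:
\[ t(G,U) = \frac{1}{p^{k\ell}} \sum_{\mathbf{x} \in V^\ell} \prod_{e \in E(G)} U(x_{e_1}, x_{e_2}). \]
Expanding each factor via $2U(x,y) = \exp(2\pi i q(x+y)/p) + \exp(-2\pi i q(x+y)/p)$ introduces a sign $\sigma_e \in \{\pm 1\}$ for each edge; interchanging the finite sums gives
\[ 2^{|E(G)|} t(G,U) = \sum_{\sigma \in \{\pm 1\}^{E(G)}} \frac{1}{p^{k\ell}} \sum_{\mathbf{x}\in V^\ell} \exp\left(\frac{2\pi i}{p}\sum_{e\in E(G)} \sigma_e\, q(x_{e_1}+x_{e_2})\right). \]

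The key step is that the exponent decouples across the $k$ coordinates of $\mathbb{F}_p^k$. Writing $x_v = (x_{v,1},\dots,x_{v,k})$ and collecting, for each $i \in [k]$, the coordinate vector $y^{(i)} := (x_{1,i},\dots,x_{\ell,i}) \in \mathbb{F}_p^\ell$, the map $\mathbf{x} \mapsto (y^{(1)},\dots,y^{(k)})$ is a bijection $V^\ell \to (\mathbb{F}_p^\ell)^k$. Since $q(u) = s\sum_i u_i^2$, the definitions of $E_e$ and $M_G^\sigma$ give $\sum_e \sigma_e q(x_{e_1}+x_{e_2}) = s\sum_{i=1}^k (y^{(i)})^{T} M_G^\sigma y^{(i)}$, so the inner sum factors as a $k$-th power:
\[ \sum_{\mathbf{x} \in V^\ell} \exp\left(\frac{2\pi i}{p}\sum_e \sigma_e q(x_{e_1}+x_{e_2})\right) = \left( \sum_{y \in \mathbb{F}_p^\ell} \exp\left(\frac{2\pi i s}{p}\, y^{T} M_G^\sigma y\right) \right)^{k}. \]
Then I would evaluate $S(M) := \sum_{y\in\mathbb{F}_p^\ell} \exp(2\pi i s\, y^{T} M y/p)$ for $M = M_G^\sigma$ using the congruence diagonalization $C^{T} M C = D_G^\sigma = \operatorname{diag}(d_1^\sigma,\dots,d_{\mathrm{rk}}^\sigma,0,\dots,0)$: the change of variables $y = Cz$ is a bijection of $\mathbb{F}_p^\ell$ (as $C$ is invertible) and turns $y^{T}My$ into $\sum_{j=1}^{\mathrm{rk}} d_j^\sigma z_j^2$, so the sum factors over $z_1,\dots,z_\ell$, with each of the $\ell - \mathrm{rk}$ free coordinates contributing a factor $p$ and each $j \le \mathrm{rk}$ contributing $\sum_{z_j \in \mathbb{F}_p}\exp(2\pi i s d_j^\sigma z_j^2/p) = g(d_j^\sigma s; p)$ (here $s d_j^\sigma \not\equiv 0 \bmod p$ since $s \ne 0$ and $d_j^\sigma \ne 0$). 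Hence $S(M_G^\sigma) = p^{\ell - \mathrm{rk}(M_G^\sigma)} \prod_{j=1}^{\mathrm{rk}(M_G^\sigma)} g(d_j^\sigma s; p)$; substituting back, the $p^{-k\ell}$ cancels the $p^{k(\ell-\mathrm{rk})}$ together with the $p^{k\,\mathrm{rk}}$ absorbed into the Gauss-sum denominators, producing exactly the claimed identity.

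I do not expect a genuine obstacle here; the content is essentially bookkeeping. The two points requiring care are (i) that the diagonalizing matrix $C$ need not be orthogonal and $D_G^\sigma$ need not consist of eigenvalues --- only invertibility of $C$ over $\mathbb{F}_p$ is used, which is exactly what makes $y = Cz$ a legitimate change of variables --- and (ii) tracking the power of $p$, where the factor $p^{\ell - \mathrm{rk}}$ from the degenerate directions of the quadratic form is precisely what is needed for the final normalization to come out as $\bigl(g(d_j^\sigma s;p)/p\bigr)^k$ with no stray powers of $p$.
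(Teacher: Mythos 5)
Your proposal is correct and follows essentially the same route as the paper: expand $2U$ as a sum of two characters to introduce the signs $\sigma$, rewrite the exponent as a quadratic form in $M_G^\sigma$, diagonalize congruently via the invertible $C$, and evaluate the resulting decoupled quadratic Gauss sums. The only cosmetic difference is order of operations: you first factor over the $k$ coordinates of $V=\mathbb{F}_p^k$ (reducing to the $k$-th power of a sum over $\mathbb{F}_p^\ell$) and then diagonalize, whereas the paper does both at once by working in $V^\ell \cong \mathbb{F}_p^\ell \otimes V$ with the tensored matrix $M_G^\sigma \otimes s\,\mathrm{Id}_V$ and the change of variables $C \otimes \mathrm{Id}_V$.
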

\begin{proof} 
First expand
    \begin{align*}
        2^{|E(G)|}|V|^\ell t(G,U) &= \sum_{x_1,\dots,x_\ell \in V}  \prod_{e \in E(G)}\left(\exp(2\pi i q(x_{e_1} +  x_{e_2})/p) + \exp(-2\pi i q(x_{e_1} + x_{e_2})/p)\right) \\
        &= \sum_{\sigma \in \{\pm 1\}^{E(G)}} \sum_{x_1,\dots,x_\ell \in V} \exp \left(\frac{2\pi i}{p} \sum_{e \in E(G)} q(x_{e_1} + x_{e_2})\sigma_{e}\right).
    \end{align*}
    
    We identify the tensor product $\mathbb{F}_p^{\ell} \otimes V$ with $V^{\ell}$ in the natural manner (so, e.g., $(1,0,\dots,0) \otimes x \mapsto (x,0,\dots,0)$). We may thus rewrite  
    \begin{align*}
        \sum_{x_1,\dots,x_\ell \in V} \exp \left(\frac{2\pi i}{p} \sum_{e \in E(G)} q(x_{e_1} + x_{e_2})\sigma_{e}\right) &= \sum_{\mathbf{x} \in V^\ell} \exp \left(\frac{2\pi i}{p} \sum_{e \in E(G)} s (x_{e_1} + x_{e_2})^T (x_{e_1} + x_{e_2})\sigma_{e}\right) \\
        &= \sum_{\mathbf{x} \in V^\ell} \exp \left(\frac{2\pi i}{p} \sum_{e \in E(G)} s\cdot  \mathbf{x}^T E_{e} \otimes \mathrm{Id}_V \mathbf{x}\sigma_{e}\right) \\
        &= \sum_{\mathbf{x} \in V^\ell} \exp \left(\frac{2\pi i}{p}   \mathbf{x}^T \left(\sum_{e \in E(G)}  (E_{e}\sigma_{e}) \otimes s\mathrm{Id}_V \right) \mathbf{x}\right) \\
        &=  \sum_{\mathbf{x} \in V^{\ell}}\exp\left(\frac{2\pi i}{p} \mathbf{x}^T (M_G^\sigma \otimes s\mathrm{Id}_V) \mathbf{x}\right)\,.
    \end{align*}

    Using the invertibility of $C\otimes \operatorname{Id}_V$ on $V^{\ell}$, we may apply the invertible transformation $\mathbf{x} \mapsto (C \otimes \mathrm{Id}_V) \mathbf{x}$ to see
    \begin{align*}
    \sum_{\mathbf{x} \in V^{\ell}}\exp\left(\frac{2\pi i}{p} \mathbf{x}^T (M_G^\sigma \otimes s\mathrm{Id}_V) \mathbf{x}\right)
    &= \sum_{\mathbf{x} \in V^{\ell}}\exp\left(\frac{2\pi i}{p} \mathbf{x}^T (C \otimes \operatorname{Id}_V) ^T(M_G^\sigma \otimes s\mathrm{Id}_V)(C \otimes \operatorname{Id}_V) \mathbf{x}\right) \\
    &= \sum_{\mathbf{x} \in V^{\ell}}\exp\left(\frac{2\pi i}{p} \mathbf{x}^T (D_G^\sigma \otimes s\mathrm{Id}_V)\mathbf{x}\right)\\
    &= \sum_{x_1,\dots, x_{\ell} \in V}  \prod_{j=1}^{\mathrm{rk}(M)}\exp\left(\frac{2\pi i d_j^\sigma s}{p} x_j^T  x_j\right)\\
    &= |V|^{\ell - \mathrm{rk}(M)}\prod_{j=1}^{\mathrm{rk}(M)} \sum_{x  \in V} \exp\left(\frac{2\pi i d_j^\sigma s x^T x }{p} \right)\\
    &= |V|^{\ell}\prod_{j=1}^{\mathrm{rk}(M)}(g(d_j^\sigma s; p)/p)^{k},
    \end{align*}
    where in the last line we have used $|V| = p^k$.
\end{proof}

To simplify this expression, we recall the values of quadratic Gauss sums. 

\begin{fact}[see,~e.g.~{\cite[Section~53]{nagell2021introduction}}]
    \label{fact:gauss-sum}
    For any $\mu \in \mathbb{F}_p^\times$, 
    \[g(\mu; p) = \left(\frac{\mu}{p}\right)g(1;p),\]
    where $(\frac{\cdot}{\cdot})$ denotes the Legendre symbol. Moreover,
    \begin{align*}
        g(1;p) = \begin{cases}
            \sqrt{p} \quad \text{if } p\equiv 1 \mod 4, \\
            i\sqrt{p} \quad \text{if } p\equiv 3 \mod 4.
        \end{cases}
    \end{align*}
\end{fact}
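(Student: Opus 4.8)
\medskip
\noindent\textbf{Proof proposal for \cref{fact:gauss-sum}.} This is the classical evaluation of the quadratic Gauss sum, due to Gauss, and in our setting the right move is to quote it, as is done here via \cite{nagell2021introduction}. For completeness, here is the plan for a proof; it falls into three stages of very uneven difficulty, the first two of which are routine and the last of which carries all the content.

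First, the multiplicativity $g(\mu;p) = \left(\frac{\mu}{p}\right)g(1;p)$ for $\mu \in \mathbb{F}_p^\times$ follows from the identity $\#\{x \in \mathbb{F}_p : x^2 = a\} = 1 + \left(\frac{a}{p}\right)$: it gives
\[ g(\mu;p) = \sum_{a \in \mathbb{F}_p}\left(1 + \left(\frac{a}{p}\right)\right)e^{2\pi i\mu a/p} = \sum_{a \in \mathbb{F}_p}\left(\frac{a}{p}\right)e^{2\pi i\mu a/p}, \]
the ``$1$'' dropping out because $\sum_a e^{2\pi i\mu a/p} = 0$; substituting $a \mapsto \mu^{-1}a$ and using $\left(\frac{\mu^{-1}}{p}\right) = \left(\frac{\mu}{p}\right)$ extracts the factor $\left(\frac{\mu}{p}\right)$ and leaves $\sum_a \left(\frac{a}{p}\right)e^{2\pi i a/p}$, which equals $g(1;p)$ by the same identity taken with $\mu = 1$. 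Next, the modulus: from the above, $\overline{g(1;p)} = g(-1;p) = \left(\frac{-1}{p}\right)g(1;p)$, so $|g(1;p)|^2 = \left(\frac{-1}{p}\right)g(1;p)^2$, while directly
\[ |g(1;p)|^2 = \sum_{x,y \in \mathbb{F}_p} e^{2\pi i(x^2 - y^2)/p} = \sum_{u,v \in \mathbb{F}_p} e^{2\pi i uv/p} = p, \]
using the bijective substitution $(u,v) = (x+y,\,x-y)$ on $\mathbb{F}_p^2$ (valid since $p$ is odd) and then summing over $u$. Hence $g(1;p)^2 = \left(\frac{-1}{p}\right)p = (-1)^{(p-1)/2}p$, so $g(1;p) \in \{\pm\sqrt p\}$ if $p \equiv 1 \pmod 4$ and $g(1;p) \in \{\pm i\sqrt p\}$ if $p \equiv 3 \pmod 4$.

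The remaining step — that the sign is always $+$ — is Gauss's sign theorem, and is the only hard part; I would either cite it or reproduce Schur's eigenvalue argument (Dirichlet's Fourier-analytic argument is another option). For $\omega = e^{2\pi i/p}$ and $F = (\omega^{jk})_{0 \le j,k < p}$ one has $\operatorname{tr} F = g(1;p)$ and $F^2 = pR$, where $R$ is the permutation matrix $j \mapsto -j$; since $R^2 = \mathrm{Id}$ with $\operatorname{tr} R = 1$, the spectrum of $F$ lies in $\{\pm\sqrt p, \pm i\sqrt p\}$ with $\{\sqrt p, -\sqrt p\}$ and $\{i\sqrt p, -i\sqrt p\}$ carrying total multiplicities $(p+1)/2$ and $(p-1)/2$. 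Combined with $|g(1;p)| = \sqrt p$, knowing $\operatorname{tr} F$ pins down these multiplicities up to a single sign, which is then fixed by evaluating $\det F$ as a Vandermonde product. This sign determination is the main obstacle: the multiplicativity and the modulus are elementary, whereas the sign is the genuine classical difficulty, which is exactly why we invoke \cref{fact:gauss-sum} as a black box.
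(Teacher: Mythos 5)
The paper does not give a proof of this statement; it is presented as a \emph{Fact} with a reference to Nagell's text, precisely the route you recommend as ``the right move.'' Your supplementary sketch of the classical argument---multiplicativity via the count $\#\{x: x^2=a\} = 1 + \left(\tfrac{a}{p}\right)$, the modulus $|g(1;p)|^2 = p$ via the substitution $(u,v)=(x+y,x-y)$ together with $g(1;p)^2 = \left(\tfrac{-1}{p}\right)p$, and the sign via Schur's eigenvalue analysis of the DFT matrix $F$ with $F^2 = pR$---is accurate and correctly isolates Gauss's sign theorem as the only nontrivial step, so there is nothing further to add.
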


\begin{corollary}\label{cor:hom-rank-expansion}
    In the above notation we have \begin{equation}
    \label{eqn:homomorphism-rank-expansion}
        \left|2^{|E(G)|}t(G,U) - \sum_{\sigma \in \Sigma_1} g(d_1^\sigma s;p)/p)^k \right| \leq  2^{|E(G)|}p^{-k},
    \end{equation}
    where $\Sigma_1$ denotes the set of $\sigma \in \{\pm 1\}^{E(G)}$ for which $\mathrm{rk}(M_G^\sigma) = 1$.
\end{corollary}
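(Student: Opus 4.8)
The claim to prove is \cref{cor:hom-rank-expansion}: starting from the exact formula in \cref{lem:hom-U-expansion}, we want to show that the sum over all $\sigma$ is, up to an error of size at most $2^{|E(G)|}p^{-k}$, equal to the sub-sum over only those $\sigma$ for which $\mathrm{rk}(M_G^\sigma) = 1$, with each such term replaced by $(g(d_1^\sigma s;p)/p)^k$.

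\medskip

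The plan is as follows. From \cref{lem:hom-U-expansion} we have
\[2^{|E(G)|}t(G,U) = \sum_{\sigma \in \{\pm 1\}^{E(G)}} \prod_{j=1}^{\mathrm{rk}(M_G^\sigma)}\left(\frac{g(d_j^\sigma s;p)}{p}\right)^k.\]
The key input is \cref{fact:gauss-sum}, which tells us that for any $\mu \in \mathbb{F}_p^\times$ we have $|g(\mu;p)| = \sqrt{p}$, so that $|g(\mu;p)/p| = p^{-1/2}$. Therefore a term of the above sum corresponding to some $\sigma$ with $\mathrm{rk}(M_G^\sigma) = r$ has absolute value exactly $(p^{-1/2})^{rk} = p^{-rk/2}$. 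Splitting the sum according to whether $\mathrm{rk}(M_G^\sigma) = 1$ or $\mathrm{rk}(M_G^\sigma) \geq 2$, I would write
\[2^{|E(G)|}t(G,U) = \sum_{\sigma \in \Sigma_1}\left(\frac{g(d_1^\sigma s;p)}{p}\right)^k + \sum_{\sigma : \mathrm{rk}(M_G^\sigma)\geq 2}\prod_{j=1}^{\mathrm{rk}(M_G^\sigma)}\left(\frac{g(d_j^\sigma s;p)}{p}\right)^k,\]
recalling that $\mathrm{rk}(M_G^\sigma)\geq 1$ always, so these two cases are exhaustive. The first sum is exactly the sum appearing in \cref{eqn:homomorphism-rank-expansion}, so it remains to bound the second sum in absolute value by $2^{|E(G)|}p^{-k}$.

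\medskip

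For the second sum, each term with $\mathrm{rk}(M_G^\sigma) = r \geq 2$ has modulus $p^{-rk/2} \leq p^{-k}$ (using $r \geq 2$ and $k \geq 1$). Since there are at most $2^{|E(G)|}$ choices of $\sigma$ in total, the triangle inequality gives that the modulus of the second sum is at most $2^{|E(G)|}p^{-k}$, which is precisely the claimed bound. This is essentially all there is to it; the argument is a direct triangle-inequality estimate once one has the uniform evaluation $|g(\mu;p)| = \sqrt{p}$ from \cref{fact:gauss-sum}.

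\medskip

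I do not anticipate a genuine obstacle here — this corollary is a routine consequence of \cref{lem:hom-U-expansion} and \cref{fact:gauss-sum}. The only points requiring a small amount of care are: (i) recording explicitly that $\mathrm{rk}(M_G^\sigma)\geq 1$ for every $\sigma$ (already noted in the text preceding \cref{lem:hom-U-expansion}), so that the decomposition into $\mathrm{rk}=1$ and $\mathrm{rk}\geq 2$ is a genuine partition; and (ii) noting that \cref{fact:gauss-sum} applies since $d_j^\sigma \neq 0$ and $s \neq 0$, hence $d_j^\sigma s \in \mathbb{F}_p^\times$. The real content of the construction is saved for the subsequent analysis — one will later need to understand which $\sigma$ contribute to $\Sigma_1$, and to evaluate $\sum_{\sigma \in \Sigma_1}(g(d_1^\sigma s;p)/p)^k$ precisely enough to conclude that $t(K_z,U) + |t(G,U)| < 0$ for an appropriate choice of the prime $p$ (dividing $z-2$) and of $k$ — but that is beyond the statement at hand.
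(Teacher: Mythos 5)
Your proof is correct and follows essentially the same route as the paper's (very terse) argument: split the sum from \cref{lem:hom-U-expansion} according to whether $\mathrm{rk}(M_G^\sigma)=1$ or $\geq 2$, use $|g(\mu;p)|=\sqrt{p}$ from \cref{fact:gauss-sum} to see that each rank-$\geq 2$ term has modulus at most $p^{-k}$, and apply the triangle inequality over the at most $2^{|E(G)|}$ sign patterns. You have simply spelled out the details that the paper compresses into a single sentence.
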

\begin{proof}
    Since $p$ is odd, $|g(\mu;p)| = \sqrt{p}$ for all $\mu \in \mathbb{F}_p^\times$.  Applying this bound to \cref{lem:hom-U-expansion} completes the proof. 
\end{proof}

In order to proceed, we need the following linear algebraic lemma. This is the only place where we use the assumption that $p$ divides $z-2$. The upshot of this lemma is that the only graphs for which $\Sigma_1$ in \cref{eqn:homomorphism-rank-expansion} is non-empty are cliques; moreover, in this case, as we send $k \to \infty$, the right hand side of \cref{eqn:homomorphism-rank-expansion} is negligible compared to the contribution from $\Sigma_1$. 

\begin{lemma}
    \label{lem:rank}
    Let $G$ be a graph with minimum degree at least two. The following hold:
    \begin{enumerate}
        \item If $G$ is not a clique, then $\mathrm{rk}(M_G^\sigma) \geq 2$ for all $\sigma \in \{\pm 1\}^{E(G)}$. 

        \item If $G = K_z$ and $\sigma = (1,\dots,1)$ or $\sigma = (-1,\dots,-1)$, then $\mathrm{rk}(M_G^\sigma) = 1$. 

        \item If $G = K_z$ and $\mathrm{rk}(M_G^\sigma) = 1$, then we may take $d_1^\sigma \in \{\pm 1\}$. 
    \end{enumerate}
\end{lemma}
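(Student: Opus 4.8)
The plan is to analyze the symmetric matrix $M = M_G^\sigma = \sum_{e \in E(G)} \sigma_e E_e$ directly, noting that off the diagonal it is a signed adjacency matrix: the $(u,v)$ entry for $u \ne v$ is $\sigma_{uv}$ if $uv \in E(G)$ and $0$ otherwise, while the $(u,u)$ entry is $\deg_G(u) \bmod p$ (each incident edge contributes $\sigma_e^2 = 1$ to the diagonal). For part~(1), I would argue by contrapositive: if $\mathrm{rk}(M) \le 1$, then $M$ is a rank-$\le 1$ symmetric matrix, so $M = \lambda v v^T$ for some vector $v$ over (an extension of) $\mathbb{F}_p$ and scalar $\lambda$; in particular every $2\times 2$ minor vanishes. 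The vanishing of the $2\times2$ minors on rows/columns indexed by a path $u - w - v$ with $uv \notin E(G)$ forces a contradiction. Concretely, taking the minor on $\{u,v\}$ gives $M_{uu}M_{vv} = M_{uv}^2 = 0$, so some vertex has degree $\equiv 0 \bmod p$; combining this with minors through common neighbors, one pins down that all off-diagonal entries in a connected component must be ``compatible'' in a way that is only possible when the component is a clique. I would organize this as: first handle the connected case, show each diagonal entry is nonzero unless forced otherwise, then show that $M_{uv} = 0$ for a non-adjacent pair together with rank $\le 1$ propagates zeros along paths and collapses the structure. This is the step I expect to be the main obstacle, since it is the only genuinely combinatorial-linear-algebraic part and requires care about characteristic $p$ (degrees can vanish mod $p$).

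For part~(2), with $G = K_z$ and $\sigma \equiv (1,\dots,1)$, the matrix $M$ is $(J - I) + (z-2) I = J + (z-3)I$ over $\mathbb{F}_p$ on the $z$ vertices — wait, more carefully: the diagonal is $\deg_{K_z} = z-1$ and the off-diagonal is $1$, so $M = J + (z-2)I$. Since $p \mid z-2$ by hypothesis, $M = J$, the all-ones matrix, which has rank exactly $1$ over $\mathbb{F}_p$. The case $\sigma \equiv (-1,\dots,-1)$ gives $M = -J$, also rank $1$. This is a one-line computation once the diagonal-equals-degree observation is in place.

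For part~(3), suppose $G = K_z$ and $\mathrm{rk}(M_G^\sigma) = 1$. A rank-$1$ symmetric matrix over $\mathbb{F}_p$ has the form $M = \lambda v v^T$. Since $M$ has entries in $\mathbb{F}_p$ and is nonzero, we can choose $v$ with entries in $\mathbb{F}_p$ (pick a nonzero column and rescale). The congruence $C^T M C = D = \mathrm{diag}(d_1^\sigma, 0, \dots, 0)$ can then be realized by an explicit $C$: complete $v$ (viewed via the bilinear form) to a basis so that the form restricted to $\mathrm{span}(v)$ has Gram matrix $(\lambda \langle v, v\rangle)$, hence $d_1^\sigma$ equals $\lambda \langle v, v \rangle$ up to the square of a unit (changing the basis vector $v \mapsto t v$ scales $d_1^\sigma$ by $t^2$). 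Thus $d_1^\sigma$ is determined only up to a nonzero square in $\mathbb{F}_p^\times$, so we are free to choose $d_1^\sigma$ to be any representative of its square-class, in particular $+1$ if it is a square and a fixed nonresidue — but the claim asks for $d_1^\sigma \in \{\pm 1\}$, so I would additionally observe that $-1$ and $+1$ together with one fixed nonresidue exhaust… no: rather, I expect that the off-diagonal entries being $\pm 1$ (they are $\sigma_e$) forces $\langle v, v \rangle$ and the diagonal entries $v_i^2 \lambda$ into a constrained set, and tracking that $M_{uv} = \lambda v_u v_v = \sigma_{uv} = \pm 1$ shows each $\lambda v_u v_v$ is a unit; squaring and multiplying, $\lambda^2 v_u^2 v_v^2 = 1$, and together with $M_{uu} = \lambda v_u^2$ one gets that $d_1^\sigma$, which is in the square class of $\lambda v_u^2$ for any $u$, can be normalized so that $d_1^\sigma \in \{\pm 1\}$ by choosing the diagonalizing basis vector appropriately. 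I would present part~(3) as this normalization argument, keeping the bookkeeping of square-classes explicit.
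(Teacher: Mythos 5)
Your overall strategy (exploit the rank-one structure via $2\times 2$ minors, then normalize) is the right one and is close to what the paper does, but there is a genuine error in your setup and parts (1) and (3) are not actually closed.

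The error: you assert that $(M_G^\sigma)_{uu} = \deg_G(u) \bmod p$ because ``each incident edge contributes $\sigma_e^2 = 1$ to the diagonal.'' That is false. The matrix $E_e$ has its $e_1e_1$ and $e_2e_2$ entries equal to $1$, so $\sigma_e E_e$ contributes $\sigma_e$ (not $\sigma_e^2$) to each of the two relevant diagonal entries; hence $(M_G^\sigma)_{uu} = \sum_{e \ni u}\sigma_e$, which for general $\sigma$ is not the degree. Your formula happens to give the right answer for $\sigma = (1,\dots,1)$, which is why part (2) still comes out correct, but it sends part (1) in the wrong direction: once you (mis)read $M_{uu} = 0$ as ``$\deg(u) \equiv 0 \bmod p$,'' you start worrying about degrees vanishing mod $p$ and reach for a ``propagate zeros along paths'' argument that you explicitly flag as the main obstacle. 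With the correct reading, the finish is immediate and you should supply it: if $uv \notin E(G)$, the $2\times 2$ minor on $\{u,v\}$ gives $M_{uu}M_{vv} = M_{uv}^2 = 0$, so say $M_{uu} = 0$; but $u$ has some neighbor $w$ (minimum degree $\geq 2$), and the minor on $\{u,w\}$ gives $M_{uu}M_{ww} = M_{uw}^2 = \sigma_{uw}^2 = 1 \neq 0$, a contradiction. No structure-propagation is needed.

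Part (3) is also left hanging: you correctly observe that $d_1^\sigma$ is only determined up to a nonzero square, but you never actually pin it to $\{\pm 1\}$ and end with ``I expect \dots can be normalized.'' The missing step is to first show that \emph{all} entries of $M = M_{K_z}^\sigma$ lie in $\{\pm 1\}$: for any three vertices $i,j,k$ of the clique, the minors give $M_{ii}M_{jj} = M_{jj}M_{kk} = M_{ii}M_{kk} = 1$, whence $M_{ii}^2 = 1$. Writing $v$ for the first row of $M$ (so $v_i\in\{\pm 1\}$) and noting that every row is $\pm v$, symmetry forces $M = v_1\, vv^T$; choosing an invertible $C$ whose first row is $v$ then exhibits $M$ in the form $C^T\,\mathrm{diag}(v_1,0,\dots,0)\,C$, so one may take $d_1^\sigma = v_1 \in \{\pm 1\}$. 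Your square-class bookkeeping is in the right spirit but does not by itself yield a representative in $\{\pm 1\}$; the key observation you are missing is that the minor identities already force the relevant diagonal entry to be $\pm 1$ on the nose, not merely up to squares.
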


\begin{proof}
For (2), it suffices to consider the case $\sigma = (1,\dots,1)$ since $M_G^{-\sigma} = -M_G^\sigma$. In this case, $M_G^\sigma$ is a $z\times z$ matrix with off-diagonal entries equal to $1$ and diagonal entries equal to $z-1 \equiv 1 \mod p$ (by our assumption that $p$ divides $z-2$), i.e., $M_G^\sigma$ is the constant all ones matrix, which has rank $1$. 

For (1) and (3), first observe that $\mathrm{rk}(M_G^\sigma) \geq 1$. Suppose $\mathrm{rk}(M_G^\sigma) = 1$.  Since $G$ has no isolated vertices, every row and every column of $M_G^\sigma$ must have an off-diagonal entry equal to $\pm 1$. Moreover, since $\mathrm{rk}(M_G^\sigma) = 1$, all rows (columns) must be multiples of the first row (column). This shows that all entries of $M_G^\sigma$ must be $\pm 1$. (1) follows immediately, since for $M_G^\sigma$ to have all off-diagonal entries non-zero, $G$ must necessarily be a clique.

For (3), let $G = K_z$ and suppose $\mathrm{rk}(M_G^\sigma) = 1$. As discussed above, all entries of $M_G^\sigma$ must be $\pm 1$ and all rows of $M_G^\sigma$ are either equal to the first row or the negative of the first row. Let $v = (v_1,\dots, v_z)$, where $v_i \in \{\pm 1\}$, denote the first row of $M_G^\sigma$. Let $\gamma_i = 1$ (respectively, $\gamma_i = -1$) if the $i^{th}$ row of $M_G^\sigma$ equals $v$ (respectively, $-v$). Note that, since $M_G^\sigma$ is symmetric and all rows of $M_G^\sigma$ are either equal to $v$ or $-v$, it follows that $v_i = v_1$ if $\gamma_i = 1$ and $v_i = -v_1$ if $\gamma_i = -1$, i.e.,~$\gamma_i = v_i/v_1$. Therefore,
\[(M_G^\sigma)_{ij} = \gamma_i v_j = (v_i v_j)/v_1 = v_1 (vv^T)_{ij},\]
so that $M_G^\sigma = v_1 \cdot vv^T$. 

Now, let $C$ be an invertible $\ell \times \ell$ matrix over $\mathbb{F}_p$ whose first row is $v$. We then note that $$M = C^T \mathrm{diag}(v_1,0,0,\ldots,0) C\,.$$
Recalling that $v_1 \in \{\pm 1\}$ completes the proof.
\end{proof}

By combining \cref{eqn:homomorphism-rank-expansion} and \cref{lem:rank}(1), we have that for any non-clique $G$ with minimum degree at least $2$, 
\begin{equation}
    \label{eqn:non-clique}
    |t(G,U_k)| \leq  p^{-k}
\end{equation}
for all $k$, where we have included the subscript $U_k$ to emphasize the dependence on $k$.  

For the case of $G = K_z$, $z$ odd, $z\geq 5$, we consider two cases depending on whether $p \equiv 1 \mod 4$ or $p \equiv 3 \mod 4$. 

\paragraph{\bf Case 1: $p \equiv 1 \mod 4$} Since $s$ is a quadratic nonresidue modulo $p$, so is $-s$. Therefore, by \cref{fact:gauss-sum}, $g(s;p) = g(-s;p) = -g(1;p) = -\sqrt{p}$. Hence, for any odd $k$ which is sufficiently large, it follows from \eqref{eqn:homomorphism-rank-expansion} that
\begin{align}
     t(K_z, U_k) &\leq p^{-k} \left( |\Sigma_1| 2^{-\binom{z}{2}}(-\sqrt{p})^{k}
 + 1\right) \nonumber \\
    \label{eqn:1mod4}
    &\leq  p^{-k} \left( 2\cdot 2^{-\binom{z}{2}}(-\sqrt{p})^{k}
 + 1\right)\nonumber\\
    &\leq - 2^{-\binom{z}{2}} p^{-k/2}
\end{align}
where the second inequality  follows from \cref{lem:rank}(2) since $k$ is odd and the final inequality follows from taking $k$ is sufficiently large as a function of $p$ and $z$

\paragraph{\bf Case 2: $p\equiv 3 \mod 4$} Since $s$ is a quadratic nonresidue modulo $p$, $-s$ is a quadratic residue. Therefore, by \cref{fact:gauss-sum}, $g(s;p) = -g(-s;p) = -g(1;p) = -i\sqrt{p}$. Hence, for any $k \equiv 2 \mod 4$ which is sufficiently large (depending on $z$), we have as before that
\begin{align}
    \label{eqn:3mod4}
     t(K_z, U_k) &\leq p^{-k} \left( -|\Sigma_1| 2^{-\binom{z}{2}}(\sqrt{p})^{k}
 + 1\right)\nonumber \\
    &\leq p^{-k} \left( -2\cdot 2^{-\binom{z}{2}}(\sqrt{p})^{k}
 + 1\right) \nonumber\\
    &\leq - 2^{-\binom{z}{2}} p^{-k/2};
\end{align}

In either case, we see that there is an infinite sequence of integers $k$ such that simultaneously, for all non-clique $G$ with minimum degree at least $2$,
\[t(K_z, U_k) + |t(G,U)| \leq -2^{-\binom{z}{2}}p^{-k/2} + p^{-k};\]
choosing $k$ sufficiently large with respect to $z$ completes the proof. 
\end{proof}

\section*{Acknowledgments}
The authors thank Emily Cairncross and Dhruv Mubayi for introducing this problem to us and Jacob Fox and Huy Tuan Pham for useful discussions.  V.J.\ is supported in part by NSF grant DMS-2237646 and M.M.\ is supported in part by NSF grants DMS-2137623 and DMS-2246624. F.W.\ is supported by NSF grants DMS-2404167 and DMS-2401414.

\bibliographystyle{abbrv}
\bibliography{main.bib}

\end{document}